\newcommand{\Mod}[1]{\ (\mathrm{mod}\ #1)}
\newcommand*{\medcap}{\mathbin{\scalebox{1.85}{\ensuremath{\cap}}}}%
\newcommand\reallywidetilde[1]{\ThisStyle{%
  \setbox0=\hbox{$\SavedStyle#1$}%
  \stackengine{-.1\LMpt}{$\SavedStyle#1$}{%
    \stretchto{\scaleto{\SavedStyle\mkern.2mu\AC}{.5150\wd0}}{.6\ht0}%
  }{O}{c}{F}{T}{S}%
}}
\def\test#1{%$
  \reallywidetilde{#1}\,
  %\scriptstyle\reallywidetilde{#1}\,
  %\scriptscriptstyle\reallywidetilde{#1}
%$\par
}
\newcommand*\bigcdot{\mathpalette\bigcdot@{.75}}
\newcommand*\bigcdot@[2]{\mathbin{\vcenter{\hbox{\scalebox{#2}{$\m@th#1\bullet$}}}}}
\DeclareMathAlphabet{\mathcal}{OMS}{cmsy}{m}{n}
\SetMathAlphabet{\mathcal}{bold}{OMS}{cmsy}{b}{n}
\newtheorem{theorem}{Theorem}[section]
\newtheorem{lemma}[theorem]{Lemma}
\newtheorem{proposition}[theorem]{Proposition}
\newtheorem{corollary}[theorem]{Corollary}
\newtheorem{definition}{Definition}[section]
\newtheorem{conjecture}[theorem]{Conjecture}
\numberwithin{equation}{section}
\newcommand{\Z}{\mathbb{Z}}
\newcommand{\Q}{\mathbb{Q}}
\newcommand{\F}{\mathbb{F}}
\newcommand{\sw}[1]{{\color{teal} [{SW: #1}]}}
\lstdefinestyle{myListingStyle} 
    {
        basicstyle = \small\ttfamily,
        breaklines = true,
    }
\renewcommand{\url}[1]{#1}
\newcommand*{\rom}[1]{\expandafter\@slowromancap\romannumeral #1@}
\def\imod#1{\allowbreak\mkern5mu({\operator@font mod}\,\,#1)}
\begin{document}
\title{Indices of nilpotency in certain spaces of modular forms}

\author{Matthew Boylan \and Swati}
\address{Department of Mathematics, University of South Carolina, Columbia, SC, 29208, USA}
\email{boylan@math.sc.edu, s10@email.sc.edu}

\subjclass[2020]{11F11, 11F33}
\keywords{Modular forms mod $p$, index of nilpotency}

\date{}

\begin{abstract}
We study the index of nilpotency relative to certain Hecke operators in spaces of modular forms with integer weight and level $N$ with integer coefficients modulo primes $p$ for $(p, N) \in \{(3, 1), (5, 1), (7, 1), (3, 4)\}$.  In these settings, we prove upper bounds on certain indices of nilpotency.  As an application of our bounds, we prove infinite families of congruences for $p^t$-core partition functions modulo $p$ for $p\in \{3, 5, 7\}$ and $t\geq 1$, and we prove an infinite family of congruences modulo $3$ for the $r$th power partition function, $p_r(n)$, when $r = 12k$ with $\gcd(k,6) = 1$. We also include conjectures on a function which quantifies degree lowering on powers of the Delta function by the relevant Hecke operators in these settings, and on the index of nilpotency relative to a modification of this degree-lowering function.
\end{abstract}

\maketitle

\section{Introduction and Statement of Results}

\subsection{Introduction}
\noindent
Let $p$ be prime, let $\Q_{(p)}$ denote the localization of $\Q$ at $p$, let $N\geq 1$, let $k\geq 0$, and let
\[
M_k(\Gamma_0(N))_{(p)} = 
M_k(\Gamma_0(N)) \cap \Q_{(p)}\llbracket q \rrbracket,
\]
denote the space of weight $k$ modular forms on $\Gamma_0(N)$ with
coefficients in $\Q_{(p)}$.
If $f = \sum a_n q^n \in \Q_{(p)}\llbracket q\rrbracket$ then we define

\[
\widetilde{f} = \sum \widetilde{a_n}\, q^n \in \F_p\llbracket q\rrbracket,
\]
to be its coefficient-wise reduction modulo $p$, and we define
\[
\widetilde M^{(p)}(\Gamma_0(N)) =
\{\widetilde{f} : f \in M_k(\Gamma_0(N))_{(p)}\}
\subseteq \F_p\llbracket q\rrbracket,
\]
the algebra of holomorphic integer weight modular forms on $\Gamma_0(N)$ with coefficients in $\Q_{(p)}$ reduced modulo $p$.  For relevant details on these spaces, see Section \ref{background}.  Theorem \ref{nilpotent_theorem} below gives instances where Hecke operators act locally nilpotently on subspaces of $\widetilde{M}^{(p)}(\Gamma_0(N))$.  When a Hecke operator $T$ acts locally nilpotently on a subspace $\widetilde{M} \subseteq \widetilde{M}^{(p)}(\Gamma_0(N))$, we define, for every nonzero $\widetilde{f} \in \widetilde{M}$, the index of nilpotency of $\widetilde{f}$ with respect to $T$ by
\begin{equation} \label{index_def}
N_T(\widetilde f, p) := \min\{u \geq 1 : \widetilde f \mid T^u = 0 \: \text{in} \: \F_p\llbracket q\rrbracket\}.
\end{equation}
In this paper, we study $N_T(\widetilde{f}, p)$ in certain settings, and we give applications of our results.    
Some recent works on local nilpotency include %\cite{B-K}, 
\cite{coss_zhou}, \cite{gg}, \cite{medthesis}, \cite{medved}, \cite{monsky}, \cite{moon_taguchi}, \cite{N-S-1}, \cite{N-S-2}, \cite{ono_taguchi}; some recent applications of nilpotency to congruence properties of arithmetic functions include \cite{boylan}, \cite{boylan_imrn}, \cite{boylan_ono}, \cite{chen1}, \cite{chen2}, \cite{dai}, \cite{mahlburg}, \cite{smith_ye}.  

Before we describe our work, we give some of the benchmark results on local nilpotency of Hecke operators on spaces $\widetilde{M}^{(p)}(\Gamma_0(N))$.  We require some notation.  We consider the set of primes 
 \[
 L_{p, N} = \{ \ell: \ell \equiv \pm 1 \Mod{p} \: \text{is prime and} \: \ell \nmid N\},
 \]
 and for all primes $\ell\in L_{p, N}$, 
\noindent
we consider the modified Hecke operators  
\begin{gather} \label{mod_Hecke}
	\displaystyle{	T_{\ell}^{'} = 
		\begin{dcases}
			T_{\ell} &\text{if $\ell \equiv -1 \Mod{p}$},\quad \\
			 T_{\ell} - 2 \quad &\text{if $\ell \equiv 1 \Mod{p}$}. \quad
		\end{dcases} 
  }
\end{gather}
We let $\widetilde{M}^{0, (p)}(\Gamma_0(N))$ denote the subspace of $\widetilde{M}^{(p)}(\Gamma_0(N))$ of forms with weights congruent to zero modulo $p - 1$. Theorem 3 of \cite{swd} implies, for $p\in \{2, 3\}$, that $\widetilde{M}^{0, (p)}(\Gamma_0(1)) = \widetilde{M}^{(p)}(\Gamma_0(1)) = \mathbb{F}_p[\widetilde{\Delta}]$. Further, Theorem 2(iv) of \cite{swd} implies that $\widetilde{M}^{(5)}(\Gamma_0(1)) = \F_{5}[\widetilde{E_{6}}]$.  Since $\widetilde{\Delta} = 2 - 2 \widetilde{E}_6^2$, we have $\widetilde{M}^{0,(5)}(\Gamma_0(1)) = \F_{5}[\widetilde{E}_{6}^2] = \F_{5}[\widetilde{\Delta}]$. Similarly, we observe that $\widetilde{M}^{(7)}(\Gamma_0(1)) = \F_{7}[\widetilde{E_{4}}]$ and hence, that $\widetilde{\Delta} = 1 - \widetilde{E}_{4}^3$ gives $\widetilde{M}^{0, (7)}(\Gamma_0(1)) = \F_{7}[\widetilde{E}_{4}^3] = \F_{7}[\widetilde{\Delta}]$. The following theorem captures some instances where Hecke operators act locally nilpotently on spaces of modular forms modulo~$p$.

\begin{theorem} \label{nilpotent_theorem} In the notation above, we have
\begin{enumerate} 
    \item  ({\it Serre,\,Tate} \cite{ser3, tate}) {\it Let $p \in \{2, 3, 5, 7\}$. For all $\ell \in L_{p,1}$, the operator $T_{\ell}^{'}$ acts locally nilpotently on $\widetilde{M}^{0,(p)}(\Gamma_0(1)) = \mathbb{F}_p[\widetilde{\Delta}]$.}
    \item  ({\it Ono,\,Taguchi} \cite{ono_taguchi}) {\it Let $1 \leq N \leq 17$ be odd, and let $a \geq 0$. For all primes $\ell \nmid 2N$, the operator $T_{\ell}^{'}$ acts locally nilpotently on $\widetilde{M}^{(2)}(\Gamma_{0}(2^a N))$.}
    \item  ({\it Moon,\,Taguchi} \cite{moon_taguchi}) {\it Let $(p, N) = \{ (3, 4), (5, 2)\}$, and let $a \geq 0$. For all $\ell \in L_{p, N}$, the operator $T_{\ell}^{'}$ acts locally nilpotently on $\widetilde{M}^{(p)}(\Gamma_{0}(p^a N))$.}
\end{enumerate}
\end{theorem}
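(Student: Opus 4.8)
The plan is to reduce all three assertions to a single statement about the vanishing of Hecke eigenvalues on finite-dimensional pieces, and then to control those eigenvalues through the residual Galois representations attached to the relevant mod-$p$ eigenforms. First I would note that, since every $\ell \in L_{p,N}$ satisfies $\ell \nmid pN$, the operator $T_\ell$ sends forms of weight $k$ to forms of weight $k$, so it does not raise the filtration; consequently any fixed nonzero $f$ lies in a finite-dimensional $T'_\ell$-stable subspace, namely the span of the mod-$p$ reductions of all classical forms of weight at most the filtration $w(f)$. On a finite-dimensional $\overline{\mathbb{F}_p}$-vector space a single operator acts locally nilpotently if and only if it is nilpotent, i.e. if and only if all of its eigenvalues vanish. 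Thus it suffices to show that every eigenvalue of $T'_\ell$ occurring in these spaces is $0$.

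Next I would pass to the systems of Hecke eigenvalues. To each mod-$p$ eigenform $f$ in the relevant space one attaches a semisimple residual representation $\bar\rho_f : G_{\mathbb{Q}} \to \mathrm{GL}_2(\overline{\mathbb{F}_p})$, unramified outside $pN$, with $\operatorname{tr}\bar\rho_f(\mathrm{Frob}_\ell) = a_\ell(f)$ and $\det\bar\rho_f = \chi_p^{\,k-1}$, where $\chi_p$ is the mod-$p$ cyclotomic character and $k$ is the weight. The crux is that in each listed setting these representations are of restricted type: reducible (Eisenstein) or dihedral, with splitting data governed by $\chi_p$. For a reducible $\bar\rho_f \cong \chi_p^{\,i} \oplus \chi_p^{\,j}$ one has $a_\ell(f) \equiv \ell^{\,i} + \ell^{\,j} \pmod p$; since the forms have even weight (trivial nebentypus on $\Gamma_0$), the determinant relation $i + j \equiv k-1 \pmod{p-1}$ forces $i + j$ to be odd for $p$ odd, so that for $\ell \equiv 1 \pmod p$ one gets $a_\ell(f) \equiv 2$, while for $\ell \equiv -1 \pmod p$ one gets $(-1)^i + (-1)^j \equiv 0$ (and for $p=2$ both characters reduce to the trivial one, giving $a_\ell(f) \equiv 2 \equiv 0$). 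A parallel congruence argument, keyed to the splitting of $\ell$ in the relevant quadratic field, handles the dihedral case. This is exactly why the correction in the definition of $T'_\ell$ subtracts $2$ precisely when $\ell \equiv 1 \pmod p$: in every case the eigenvalue of $T'_\ell$ on $f$ is $0$.

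Combining the two steps yields the theorem: on each finite-dimensional filtration piece $T'_\ell$ has only the eigenvalue $0$, hence is nilpotent, hence acts locally nilpotently, giving (1)--(3). I expect the main obstacle to lie entirely in the classification step, namely proving that every $\bar\rho_f$ occurring is reducible or dihedral of the required shape. For part (1) this rests on the scarcity of level-one eigenforms for $p \le 7$ and the constraints low weight places on the image of $\bar\rho_f$, ruling out large image via Serre's conductor--weight bounds together with the explicit structure of $\mathbb{F}_p[\Delta]$; for parts (2) and (3) it requires controlling ramification at $p$ and at the primes dividing $N$ and showing the attached deformation problem admits no irreducible large-image solutions, which is precisely where the restriction to the specific small pairs $(p,N)$ and to $2$-power (respectively $p$-power) level enters. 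A fallback for part (1) is Serre and Tate's original route through the local structure of the mod-$p$ Hecke algebra, exhibiting it as a complete local ring whose maximal ideal contains all the $T'_\ell$ and acts nilpotently on each finite-dimensional component.
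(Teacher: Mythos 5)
First, a point of comparison: the paper does not prove Theorem \ref{nilpotent_theorem} at all. It is quoted as known background, with the three parts attributed to Serre--Tate, Ono--Taguchi, and Moon--Taguchi respectively, so your proposal must be measured against the proofs in those references. Your skeleton --- restrict $T_{\ell}'$ to the finite-dimensional graded pieces $\widetilde{M}_k^{(p)}$, note that local nilpotency there is equivalent to the vanishing of all generalized eigenvalues, and compute eigenvalues through the residual representations $\bar\rho_f$ --- is indeed the architecture of those proofs. But the proposal has a gap of omission and a gap of substance. The omission: essentially all of the content lives in the classification step you explicitly defer. For part (1) that step is Tate's and Serre's non-existence theorems for irreducible two-dimensional mod-$p$ representations unramified outside $p$ (proved via discriminant bounds), together with the reduction of eigensystems to weight $\leq p+1$ and the vanishing of $S_k(\Gamma_0(1))$ for $k \leq 8$; for parts (2) and (3) it is precisely the main theorems of the Ono--Taguchi and Moon--Taguchi papers. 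What remains after deferring it is an outline, not a proof.

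The substantive gap is that your claim ``in every case the eigenvalue of $T_{\ell}'$ on $f$ is $0$'' is false for the two classes of representations you admit beyond $\chi_p^{\,i}\oplus\chi_p^{\,j}$, so those classes must be \emph{excluded}, not accommodated. If $\bar\rho_f^{\,ss}\cong \psi\chi_p^{\,i}\oplus\psi^{-1}\chi_p^{\,j}$ with $\psi$ nontrivial and ramified at a prime dividing $N$ (which the trivial-nebentypus determinant condition permits once $N>1$), the eigenvalue need not vanish: for example, mod $2$ with $\psi$ cubic of conductor $7$ one gets $a_\ell \equiv \psi(\ell)+\psi(\ell)^{-1}\equiv 1$ whenever $\psi(\ell)\neq 1$. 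Such Eisenstein systems are ruled out only by a conductor computation (they live at level $49$, not $2^a\cdot 7$), not by your congruence argument. Likewise, a dihedral $\bar\rho_f$ is not ``handled'' by splitting considerations: for $\ell$ split in the relevant quadratic field the eigenvalue $\psi(\mathfrak{l})+\psi(\bar{\mathfrak{l}})$ is generically neither $0$ nor $2$. The case $p=23$ makes this concrete: $\Delta\bmod 23$ is dihedral attached to $\mathbb{Q}(\sqrt{-23})$, every $\ell\equiv 1\pmod{23}$ splits there, and $\tau(\ell)\equiv -1\pmod{23}$ for the non-principal split primes, so $T_\ell-2$ is \emph{not} nilpotent on $\mathbb{F}_{23}[\Delta]$. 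The theorem holds for $p\in\{2,3,5,7\}$ exactly because no irreducible (in particular no dihedral) representation occurs there, and proving that non-occurrence is the heart of the matter.
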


We now give theorems of Nicolas and Serre \cite{N-S-2} and Medvedovsky~\cite{medthesis} on indices of nilpotency.  To state the theorem of Nicolas and Serre, we need to extend the definition \ref{index_def}.  When an algebra $A$ of Hecke operators acts locally nilpotently on a subspace $\widetilde M \subseteq \widetilde M^{(p)}(\Gamma_0(N))$, we
define $N_{A}(\widetilde{f}, p)$, the index of nilpotency of a nonzero $\widetilde{f} \in \widetilde{M}$ with respect to $A$, to be the integer
$n \geq 1$ such that
\begin{enumerate}
\item for all $R_1,\ldots,R_{n} \in A$, we have
\[
\widetilde f \mid R_1 \mid \cdots \mid R_{n} = 0
\]
\item there exists $S_1,\ldots,S_{n - 1} \in A$ with
\[
\widetilde f \mid S_1 \mid \cdots \mid S_{n - 1} \neq 0.
\]
\end{enumerate}
For all $T\in A$ and nonzero $\widetilde{f} \in \widetilde{M}$, we note that $N_{T}(\widetilde{f}, p)\leq N_{A}(\widetilde{f}, p)$.  
%When $A = \mathbb{F}_p\llbracket Q_1,\ldots, Q_t\rrbracket$, we further note that \begin{equation} N_{A}(f, p) \leq t\cdot \text{max}\{N_{Q_i}(f, p)\}. \end{equation}
To simplify notation, when $\ell \in L_{p, N}$ and $\widetilde{f}$ is in one of the spaces in Theorem~\ref{nilpotent_theorem}, we set $N_{\ell}(\widetilde{f}, p) = N_{T_{\ell}^{\prime}}(\widetilde{f}, p)$.   
The following theorem gives bounds on indices of nilpotency modulo $p\in \{2, 3, 5, 7\}$ in level one. 
\begin{theorem} \label{known} Let $k\geq 1$.  
\begin{enumerate} 
\item ({\it Nicolas-Serre (2012)} \cite{N-S-2})
{\it Let $k$ be odd with base 2 expansion $k = \sum \beta_{i}(k) 2^{i}$,  and define 
\begin{equation*}
n_{3}(k) = \sum \beta_{2i+1}(k) 2^{i}, \ \ n_{5}(k) = \sum \beta_{2i+2}(k) 2^{i},
\end{equation*}}
and $A = \langle T_{\ell} : \ell\neq 2\rangle$.
Then we have
\begin{equation*} 
\frac{1}{2} \sqrt{k} < N_A(\widetilde{\Delta}^{k}, 2) = 1 + n_{3}(k) + n_{5}(k) < \frac{3}{2}\sqrt{k}.
\end{equation*}
%\item ({\it Bella\"iche - Khare (2015)} \cite{B-K}) 
%{\it Let $k$ have base 3 expansion $k = \sum \delta_{i}(k) 3^{i}$. Then we have
%\[
% N(\Delta^{k}, 3) \leq \sum \delta_{i}(k) 2^{i} =  O(k^{\log_{3}(2)}). 
%\]}
\item ({\it Medvedovsky (2015)} \cite{medthesis})
{\it For all $(p,\ell)$ listed in the table below, the explicit constants $c > 0$ and $0<\alpha < 1$ given there satisfy $N_{\ell}(\widetilde{\Delta}^{k}, p) < c \cdot k^{\alpha}$.}

\renewcommand{\arraystretch}{1.1} % Adjust the vertical spacing
\setlength{\tabcolsep}{12pt} % Adjust the horizontal spacing
\begin{table}[ht]
  \centering
  %\resizebox{1.3\textwidth}{!}{%
 % \resizebox{\textwidth}{!}{%
\begin{tabular}{|c|c|c|||c|c|c|c|}
\hline
$(p, \ell)$ & $c$ & $\alpha$ & $(p, \ell)$ & $c$ & $\alpha$  \\ 
\hline
(2, 3) & 3 & 0.5 & (2, 5) & 7/3 &  2/3 \\
\hline
(3, 2) & 4 & $\log_{3}(2)$ & (3, 7) & 3.2 &  $\log_{9}(6)$ \\
\hline
(5, 19) & 138/11 &  $\log_{25}(23)$ & (5, 11) & 6.3 &  $\log_{25}(21)$ \\
\hline
(7, 13) & 564/23 & $\log_{49}(47)$ & (7, 29) & 564/23 & $\log_{49}(47)$ \\
\hline
\end{tabular}
 % }
\caption{Explicit values of $c$ and $\alpha$ for different $p$ and $\ell$}
\label{tab: explicit}
\end{table}
\end{enumerate}
\end{theorem}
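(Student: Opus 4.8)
\emph{The plan.} The two parts are quite different in character, so I would handle them separately, but in both cases the engine is the module structure of $\mathbb{F}_p[\Delta]$ under the relevant Hecke operators together with a self-similarity in the exponent $k$ that reduces the index computation to solving a recursion. For part (1) I would work in $M = \mathbb{F}_2[\Delta]$ and first recall the structural fact that the mod-$2$ level-one Hecke algebra $A = \langle T_\ell : \ell \neq 2\rangle$ is topologically generated by $T_3$ and $T_5$, so that every Hecke word reduces to a word in these two operators. The next step is to record explicit congruences expressing $\Delta^k \mid T_3$ and $\Delta^k \mid T_5$ as $\mathbb{F}_2$-linear combinations of lower powers $\Delta^j$, with the combinatorics controlled by the binary digits $\beta_i(k)$ of $k$.

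With these recursions in hand, I would introduce a filtration on $M$ indexed by the pair $(n_3(k), n_5(k))$ and prove, by induction on the number of set bits of $k$, both that every word of length $1+n_3(k)+n_5(k)$ in $T_3, T_5$ kills $\Delta^k$ and that some shorter word does not; the splitting $1 + n_3 + n_5$ reflects that $T_3$ consumes the odd-indexed binary digits of $k$ and $T_5$ the even-indexed ones, essentially independently, so the longest surviving chain has length equal to the combined digit content plus one. The two inequalities are then an elementary estimate on $n_3(k) + n_5(k)$: writing $2^m \le k < 2^{m+1}$, so $\sqrt{k} \asymp 2^{m/2}$, the sum is largest when all digits are set, giving $n_3 + n_5 = 2(2^{m/2}-1) \sim \sqrt{2}\,\sqrt{k} < \tfrac32\sqrt{k}$, and smallest relative to $\sqrt{k}$ when $k = 2^m+1$ with $m$ even, giving $1 + n_3 + n_5 \sim \tfrac12\sqrt{k}$; verifying that these really are the extremal digit configurations yields the strict bounds for all $k$.

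For part (2) I would exploit the self-similarity coming from the interplay of $T_\ell'$ with the Frobenius/theta structure on $\mathbb{F}_p[\Delta]$. The goal is a scaling recursion $N_\ell(\Delta^{\lambda k}, p) \le b\cdot N_\ell(\Delta^k, p) + O(1)$ for a dilation $\lambda = \lambda(p,\ell)$ (typically $p^2$) and a branching number $b = b(p,\ell)$ counting the distinct lower-degree pieces produced by a block of applications of $T_\ell'$; iterating gives $N_\ell(\Delta^k, p) < c\,k^{\log_\lambda b} = c\,k^\alpha$ with $\alpha = \log_\lambda b$. For the generic listed pairs this is $\alpha = \log_{p^2} b$ with $b < p^2$ (for instance $b = 47$ for $p = 7$, recovering $\log_{49}47$), while the small cases such as $(3,2)$ and $(2,5)$ use a different dilation. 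Proving the recursion requires a careful study of the degree-lowering behavior of $\Delta^k \mid T_\ell'$ modulo $p$, organized by the theta cycle and the $p$-adic digits of $k$, and identifying exactly which terms can survive repeated application; the constant $c$ and the precise $b$ then come from the case-by-case analysis for each $(p,\ell)$.

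In both parts the crux is this degree-lowering analysis. The hard part for (1) is showing that the interaction of $T_3$ and $T_5$ is clean enough to give the exact value $1 + n_3 + n_5$ rather than mere bounds, and for (2) it is pinning down the branching number $b(p,\ell)$ — and hence the sharp exponent $\alpha$ and the constant $c$ in the table — which is where the bulk of the case analysis lives.
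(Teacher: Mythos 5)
You should first note that the paper contains no proof of this statement: Theorem \ref{known} is quoted background, attributed to Nicolas--Serre \cite{N-S-2} and Medvedovsky \cite{medthesis}, so there is no internal argument to compare yours against. Judged on its own, your proposal is a reasonable reconstruction of the \emph{strategies} of those two papers, but it is a plan rather than a proof, and the load-bearing steps are exactly the ones left unproved. For part (1), everything rests on the structure theorem that $A=\langle T_\ell:\ell\neq 2\rangle$ acting on $\Delta\,\mathbb{F}_2[\Delta]$ is (topologically) generated by $T_3$ and $T_5$, i.e.\ $A\cong\mathbb{F}_2[[T_3,T_5]]$; you invoke this as a ``structural fact'' but it is the main theorem of the companion paper \cite{N-S-1} and cannot be assumed. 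Likewise, the ``explicit congruences expressing $\Delta^k\mid T_3$ and $\Delta^k\mid T_5$ in terms of the binary digits of $k$'' and the induction giving the \emph{exact} value $1+n_3(k)+n_5(k)$ are precisely the content of Nicolas--Serre's ``code'' machinery; without them your filtration argument has no base to stand on. For part (2), the asserted scaling recursion $N_\ell(\Delta^{\lambda k},p)\le b\,N_\ell(\Delta^k,p)+O(1)$ is the conclusion of Medvedovsky's recursion-operator theorem, not an input; deriving it requires the modular-equation recursion satisfied by $\Delta^k\mid T_\ell$ and her nilgrowth-witness analysis, and the specific constants $c$ and $\alpha$ in Table \ref{tab: explicit} come out of that computation, not from a generic branching heuristic.

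There is also a concrete numerical error in your extremal analysis for the inequalities in part (1). When all binary digits of $k$ are set, say $k=2^{2r+2}-1$, one gets $n_3(k)=2^{r+1}-1$ and $n_5(k)=2^r-1$, so $1+n_3+n_5=3\cdot 2^r-1\approx\tfrac{3}{2}\sqrt{k}$, not $\sqrt{2}\,\sqrt{k}$ as you claim. Your estimate would imply the upper bound could be improved to $\sqrt{2}\,\sqrt{k}$, which is false; the constant $\tfrac{3}{2}$ in the theorem is sharp precisely because of this all-bits-set configuration. The lower-bound extremal case $k=2^m+1$ is handled correctly in spirit, but both inequalities need the exact formula already in hand, so they are the easy part. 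In short: correct identification of the relevant methods, but the proposal defers every essential step to results it does not establish.
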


\subsection{Statement of Results}
We now turn to our work.  For details on modular forms, see Section~\ref{background}.  
%\sw{Not required - For all $(p, N, f) \in \{(5, 1, \Delta), (7, 1, \Delta), (3, 4, D_2)\}$, for all $k\geq 1$, and for all $\ell \in L_{p, N}$, we prove upper bounds on $N_{\ell}(\widetilde{f}^k, p)$ in the following theorem.} 
Our first theorem gives upper bounds on $N_{\ell}(f^k, p)$ in certain cases.  

\begin{theorem}\label{simple_bound}
Let $k\geq 1$.  
\begin{enumerate} 
\item For all primes $\ell\neq 3$, we have
\begin{equation*}
N_{\ell}(\widetilde{\Delta}^k, 3) \leq \begin{cases}
1 + \big\lfloor\frac{k}{3}\big\rfloor, & \ell\equiv 1 \pmod{3}, \\
1 + \big\lfloor\frac{2k}{3}\big\rfloor, & \ell\equiv 2 \pmod{3}. \\
\end{cases}
\end{equation*}
\item Let $p\in \{5, 7\}$. For all primes $\ell\equiv \pm 1 \pmod{p}$, we have 
\begin{equation*}
N_{\ell}(\widetilde{\Delta}^k, p) \leq \begin{cases}
1 + \big\lfloor\frac{2k}{5}\big\rfloor, & p = 5, \, p\nmid k, \\
1 + \big\lfloor{\frac{3k}{7}\big\rfloor}, & p = 7, \, k \in \{1, 3, 5\} \pmod{7}, \\
2 + \big\lfloor{\frac{3k}{7}\big\rfloor}, & p = 7, \, k \in \{2, 4, 6\} \pmod{7}, \\
N_{\ell}(\widetilde{\Delta}^{k/p}, p), & p\mid k.
\end{cases}
\end{equation*}
\item {\it Let $D_2(z) = \eta(2z)^{12}\in S_6(\Gamma_0(4))$, where $\eta(z)$ is the Dedekind eta-function as in \eqref{eta}, and let $\ell \not\in\{2, 3\}$ be prime.}  
\begin{enumerate} 
\item Suppose that $\gcd(k, 6) = 1$. We have 
\begin{equation*}
N_{\ell}(\widetilde{D}_2^k, 3) \leq  1 + \Big\lfloor\frac{k}{3}\Big\rfloor.
\end{equation*}
\item We also have 
\begin{equation*}
N_{\ell}(\widetilde{D}_2^k, 3) \leq \begin{cases} N_{\ell}(\widetilde{D}_2^{k/3}, 3), & 3\mid k, \\
N_{\ell}(\widetilde{\Delta}^{k/2}, 3), & 2\mid k.
\end{cases}
\end{equation*}
\end{enumerate}
\end{enumerate}
\end{theorem}
\noindent
{\bf Remarks.}
\begin{enumerate}
\item An important feature of Medvedovsky's bounds on $N_{\ell}(\widetilde{\Delta}^k, p)$ in part two of Theorem~\ref{known} is that they are {\it sublinear}.  While ours are linear, we note that they are stronger than the bounds in Table \ref{tab: explicit} for a large--but finite--number of values of $k$, as follows:
\vspace{1mm}
\begin{itemize}
    \item For $\ell \equiv 1 \pmod{3}$ and $k \leq 2.1 \times 10^5$,
    \item For $\ell \equiv -1 \pmod{5}$ and $k \leq 5.86 \times 10^{57}$,
    \item For $\ell \equiv 1 \pmod{5}$ and $k \leq 1.27 \times 10^{22}$,
    \item For $\ell \equiv \pm 1 \pmod{7}$ and $k \leq 1.36 \times 10^{164}.$
\end{itemize}

%\item For $\ell \equiv -1 \pmod{3}$, we obtain  $N_{\ell}(\Delta^k, 3) \leq 1 + \lfloor \frac{2k}{3} \rfloor$. This bound is similar to the bound $k - \lfloor \frac{k}{3} \rfloor$ proved by Nicolas in \cite{} \textcolor{blue}{(I couldn't find the paper where he showed this)}.

\item Let $p\in \{2, 3, 5, 7\}$ and let $\ell\in \{\pm 1 \pmod{p}\}$ be prime. By Th\'eor\`eme 23 of \cite{N-S-2} and Propositions 4.21 and 6.2 of \cite{medthesis}, the sequence $\{\widetilde{\Delta}^{k}\mid T_{\ell}' : k\geq 1\}$ satisfies a recurrence
\begin{equation} 
\label{recurrence}
    \widetilde{\Delta}^k\mid T_{\ell}' = \sum_{1\leq m\leq t} b_{m, \ell}(\widetilde{\Delta})\widetilde{\Delta}^{k - m}\mid T_{\ell}^{'},
\end{equation}
where $t = \begin{cases} \ell + 1, & \ell \equiv -1\pmod{p} \\
\ell + 2, & \ell\equiv 1 \pmod{p}\end{cases}$ is the order of the recurrence
and $b_{m, \ell}\in \mathbb{F}_p[Y]$ has degree at most $m$.  For all $\widetilde{f}\in \mathbb{F}_p[\widetilde{\Delta}]$, we let $\text{deg}_{\widetilde{\Delta}}(\widetilde{f})$ denote the degree of $\widetilde{f}$ in $\widetilde{\Delta}$.  In the terminology of Chapter 4 of \cite{medthesis}, the sequence is $i$-filtered for some $i\geq 0$ precisely when we have $\text{deg}_{\widetilde{\Delta}}(\widetilde{\Delta}^n\mid T_{\ell}')\leq n - i$ for all $n\geq 1$. Part(1) of Theorem \ref{nilpotent_theorem} implies that the sequences $\{\widetilde{\Delta}^k\mid T_{\ell}' : k\geq 1\}$ are $1$-filtered.  Parts (1) and (2) of Theorem \ref{simple_bound} follow from showing that the sequences are $2$-filtered when $p\in \{3, 5, 7\}$.  

When $p\in \{3, 5, 7\}$, we note that for certain primes $\ell$, the sequence $\{\widetilde{\Delta}^{k}\mid T_{\ell}' : k\geq 1\}$ may be $i$-filtered for some $i > 2$, which would yield better bounds on $N_{\ell}(\widetilde{\Delta}^k, p)$ than in the theorem.  For example, when $p = 5$, computations show that $\text{deg}_{\widetilde{\Delta}}(\widetilde{\Delta}^{n}\mid T_{11}')\leq n - 4$ for $1\leq n\leq 13$.  An induction argument using \eqref{recurrence} shows that the sequence $\{\widetilde{\Delta}^{k}\mid T_{11}' : k\geq 1\}$ is $4$-filtered.  An argument as in the proof of the theorem then gives $N_{11}(\widetilde{\Delta}^k, 5)\leq 1 + \left\lfloor\frac{k}{4}\right\rfloor$.
 
\item Medvedovsky's computations and ours suggest that $N(\widetilde{\Delta}^k, 3) = O(k^{1/2})$.  Our computations also suggest that $N_{11}(\widetilde{\Delta}^k, 5)$ and $N_{19}(\widetilde{\Delta}^k, 5) = O(k^{1/2})$.
\end{enumerate}

\medskip

Next, we let $\delta\mid 24$ and $D_{\delta}(z) = \eta(\delta z)^{24/\delta}$, and we note that $D_1(z) = \Delta(z)$.  By Propositions 5.9.2 and 5.9.3 of \cite{Cohen2017ModularFA}, we have \begin{equation*} 
    D_{\delta}(z)\in \begin{cases} 
    S_{1/2}\left(\Gamma_0(576), \left(\frac{12}{\bigcdot}\right)\right), & \delta = 24, \\
    S_{3/2}\left(\Gamma_0(64), \left(\frac{-4}{\bigcdot}\right)\right), & \delta = 8, \\
    S_{12/\delta}\left(\Gamma_0(\delta^2), \left(\frac{(-1)^{12/\delta}\delta^{24/\delta}}{\bigcdot} \right)\right), & \delta \not\in\{8, 24\},
\end{cases}
\end{equation*}
where the forms with half-integer weight transform with respect to the theta-multiplier system as in Shimura's theory. 
In the following proposition, we identify some primes $p$ and $\ell$ and exponents $k$ such that $\widetilde{D}_{\delta}^k(z) \mid T_{\ell} = 0$ in $\F_{p}\llbracket q\rrbracket$.  In such cases, when $T_{\ell}$ acts nilpotently on an algebra containing $\widetilde{D}_{\delta}$ as in Theorem \ref{nilpotent_theorem}, we have $N_{\ell}(\widetilde{D}_{\delta}^k(z), p) = 1$.  

\begin{proposition} \label{vanishing}
Let $m\geq 1$, let $p \neq \ell$ be primes, and let $\left(\frac{\bigcdot}{\ell}\right)$ denote the Legendre symbol.  
\begin{enumerate}
\item 
Let $\delta\mid 24$, let $m$ be odd, let $p$ satisfy
\begin{enumerate}
    \item $p + 1\mid 24/\delta$,
    \item $\frac{24/\delta}{p + 1} = p^t$ or $3p^t$ for some $t\geq 0$, 
    \item $p\nmid \delta$,
\end{enumerate}
and let $r_{p, m} = \frac{p^m + 1}{p + 1}\in \Z$.  Then for all primes $\ell \neq p$ with $\ell \nmid \delta$ and $\left(\frac{-p}{\ell}\right) = -1$, we have $\widetilde{D}_{\delta}(z)^{r_{p,m}}\mid T_{\ell} = 0$ in $\F_{p}\llbracket q \rrbracket$.  The set of $(\delta, p)$ satisfying conditions (a), (b), (c) is 
\begin{equation*}
    \{(1, 2), (1, 7), (1, 23), (2, 3), (2, 11), (3, 7), (4, 5)\}.
\end{equation*}
\item Let $d\mid 24$ and let $p$ satisfy
\begin{enumerate}
    \item $24/d = p^t$ or $3p^t$ for some $t\geq 0$, 
    \item $p\nmid d$.
\end{enumerate}
Then for all primes $\ell \neq p$ with $\ell\nmid d$ and 
$-1 = \left(\frac{-p^m}{\ell}\right) = \begin{cases} \left(\frac{-p}{\ell}\right), & m \ \text{is odd}, \\ \left(\frac{-1}{\ell}\right), & m \ \text{is even}
\end{cases}$, we have 
$\widetilde{D}_{d}(z)^{p^m + 1}\mid T_{\ell} = 0$
in $\F_{p}\llbracket q\rrbracket$.  The set of $(d, p)$ satisfying conditions (a) and (b) is 
\begin{equation*}
    \{(1, 2), (3, 2)\}\cup \{(8, p) : p\geq 3\} \cup \{(24, p) : p\geq 5\}.
\end{equation*}
\end{enumerate}
\end{proposition}
\noindent
{\bf Remark.}
The primes $p \neq 2$ in part one of the proposition are precisely those with $\frac{24/\delta}{p + 1} \in \{1, 3\}$, in which cases we find that $D_{\delta}(z)^{r_{p, m}}$ is a product of two theta-series modulo $p$: 
\begin{equation*}
D_{\delta}(z)^{r_{p,m}} \equiv \eta(\delta z)^{\frac{24/\delta}{p+1}}\eta(\delta p^mz)^{\frac{24/\delta}{p + 1}} \pmod{p}.
\end{equation*}

\medskip

Our next theorems are applications of Theorem \ref{simple_bound} to congruences for arithmetic functions.
Let $t\geq 1$.  We recall that a partition $\lambda$ of a positive integer $n$ is $t$-core precisely when none of the hook numbers associated to its Ferrer's-Young diagram are divisible by $t$. The notion of a $t$-core partition plays a role in modular group representation theory \cite{jk}.  We denote the number of $t$-core partitions of $n$ by $a_{t}(n)$.  See \cite{survey_tcore} for a survey of results on $t$-core partitions and $t$-core partition functions.  
For $p\in \{3, 5, 7\}$, our first application of Theorem \ref{simple_bound} yields congruences for $p^t$-core partition functions $a_{p^t}(n)$ modulo~$p$. 

\medskip

\begin{theorem} \label{partition_5}
Let $t \geq 1$, let $p\in \{3, 5, 7\}$, let 
\begin{equation*}
k_{p, t} = \begin{cases}
\frac{9^t - 1}{8}, & p = 3, \\
\frac{p^{2t} - 1}{24}, & p \in\{5, 7\},
\end{cases} 
\end{equation*}
and let 
\begin{equation*}
m_{p, t} = \begin{cases}
1 + \big\lfloor\frac{2 k_{3,t}}{3}\big\rfloor = 1 + 3 \left(\frac{9^{t-1} - 1}{4}\right), & p = 3 \:\: \text{and} \:\: \ell \equiv 2 \pmod{3}, \\
1 + \big\lfloor\frac{k_{3,t}}{3}\big\rfloor = 1 + 3 \left(\frac{9^{t-1} - 1}{8}\right), & p = 3 \:\: \text{and} \:\: \ell \equiv 1 \pmod{3}, \\
1 + \big\lfloor\frac{2k_{5,t}}{5}\big\rfloor = 1 + 5\left(\frac{25^{t-1} - 1}{12}\right), & p = 5 %\:\: \text{and} \:\: \ell \equiv -1 \Mod{5}
,\\
%1 + \big\lfloor\frac{k_{5,t}}{4}\big\rfloor, & p = 5 \:\: \text{and} \:\: \ell \equiv 1 \Mod{5},\\
2 + \big\lfloor\frac{3k_{7,t}}{7}\big\rfloor = 2 + 7\left(\frac{49^{t-1} - 1}{8}\right), & p = 7. \\
\end{cases}
\end{equation*}
\begin{enumerate}
\item Let $p\in \{3, 5, 7\}$, and let $\ell\equiv -1 \Mod{p}$ be prime.  Then for all $r$ with $p^r \geq m_{p, t}$, and for all $n\geq 1$ with $\ell\nmid n$, we have 
\begin{equation*}
a_{p^t}(\ell^{p^r}n - k_{p, t}) \equiv - a_{p^t}(\ell^{p^r - 2}n - k_{p,t}) \pmod{p}.
\end{equation*}
\item Let $p\in \{5, 7\}$, and let $\ell_1,\ldots, \ell_{m_{p, t}}\equiv -1\Mod{p}$ be distinct primes.  Then for all $n\geq 1$ with $\gcd(\ell_1\cdots\ell_{m_{p, t}}, n) = 1$, we have
\begin{equation*}
a_{p^{t}}(\ell_1\cdots\ell_{m_{p, t}}n - k_{p, t}) \equiv 0 \pmod{p}.
\end{equation*}
\item Let $p\in \{3, 5, 7\}$, and let $\ell \equiv 1 \pmod{p}$. Then for all $r$ with $p^r \geq m_{p,t}$, and for all $n\geq 1$ with $\ell\nmid n$, we have 
\begin{equation*}\label{item:cong5_11}
a_{p^t}(\ell^{p^r} n - k_{p,t}) - a_{p^t}(\ell^{p^r - 2}n - k_{p,t}) \equiv 2 a_{p^{t}} \left(n - k_{p,t} \right) \pmod{p}.
\end{equation*}
\end{enumerate}  
\end{theorem}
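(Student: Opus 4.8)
The plan is to transfer each congruence from the $p^t$-core function to a single power of $\Delta$ modulo $p$, feed in the nilpotency bound of Theorem~\ref{simple_bound}, and then read off the three congruences from a short identity among the Hecke polynomials attached to $T_\ell$. First I would set up the dictionary between $a_{p^t}$ and the coefficients of $\Delta^{k_{p,t}}$. Starting from the $t$-core generating function $\sum_{n\ge0}a_{p^t}(n)q^n=\prod_{n\ge1}(1-q^{p^tn})^{p^t}/(1-q^n)$ and the Frobenius congruence $(1-q^m)^p\equiv 1-q^{pm}\pmod p$, the product collapses modulo $p$ to $\prod_n(1-q^n)^{p^{2t}-1}$. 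For $p\in\{5,7\}$ we have $p^{2t}-1=24k_{p,t}$, so this equals $q^{-k_{p,t}}\Delta^{k_{p,t}}$, whence the $M$-th coefficient $c(M)$ of $\Delta^{k_{p,t}}$ satisfies $c(M)\equiv a_{p^t}(M-k_{p,t})\pmod p$; for $p=3$ one has instead $24k_{3,t}=3(3^{2t}-1)$, giving $\Delta^{k_{3,t}}\equiv\sum_n a_{3^t}(n)q^{3n+k_{3,t}}\pmod 3$. I would also record the elementary facts $p\nmid k_{5,t}$ and $k_{7,t}\equiv 2\pmod 7$, which place $\Delta^{k_{p,t}}$ in exactly the branch of Theorem~\ref{simple_bound} that yields the bound $m_{p,t}$.

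Next I would extract the Hecke input. Theorem~\ref{simple_bound} gives $\Delta^{k_{p,t}}\mid (T'_\ell)^{m_{p,t}}\equiv0\pmod p$, hence $\Delta^{k_{p,t}}\mid (T'_\ell)^s\equiv0$ for all $s\ge m_{p,t}$, in particular for $s=p^r$ whenever $p^r\ge m_{p,t}$. Write $w=12k_{p,t}$ for the weight; since $\ell\equiv\pm1\pmod p$ and $w-1$ is odd, $\ell^{w-1}\equiv\pm1\pmod p$. From $T_{\ell^{s+1}}=T_\ell T_{\ell^s}-\ell^{w-1}T_{\ell^{s-1}}$, letting $\theta_\pm$ be the roots of $\theta^2-X\theta+\ell^{w-1}$ and $L_s=\theta_+^s+\theta_-^s$, the Binet form $T_{\ell^s}=p_s(T_\ell)$ with $p_s=(\theta_+^{s+1}-\theta_-^{s+1})/(\theta_+-\theta_-)$ gives, after a one-line computation using $\theta_\pm^2-\ell^{w-1}=\theta_\pm(\theta_\pm-\theta_\mp)$ and the Frobenius relation $L_{p^r}\equiv(\theta_++\theta_-)^{p^r}=X^{p^r}\pmod p$, the engine
\[
T_{\ell^{p^r}}-\ell^{\,w-1}T_{\ell^{p^r-2}}=L_{p^r}(T_\ell)\equiv T_\ell^{\,p^r}\pmod p .
\]

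Now I would evaluate. Since $T_{\ell^s}$ sends a form to its $\ell^s n$-th coefficient at any $n$ coprime to $\ell$, applying the engine to $\Delta^{k_{p,t}}$ at such an $n$ gives $c(\ell^{p^r}n)-\ell^{w-1}c(\ell^{p^r-2}n)\equiv(\Delta^{k_{p,t}}\mid T_\ell^{p^r})(n)\pmod p$. When $\ell\equiv-1$ we have $T_\ell=T'_\ell$, so the right side vanishes and $c(\ell^{p^r}n)\equiv-c(\ell^{p^r-2}n)$, which is part~(1). When $\ell\equiv1$ we have $T_\ell=T'_\ell+2$, so $T_\ell^{p^r}\equiv(T'_\ell)^{p^r}+2^{p^r}\equiv 2\pmod p$ on $\Delta^{k_{p,t}}$ (freshman's dream together with $2^{p^r}\equiv2$), giving $c(\ell^{p^r}n)-c(\ell^{p^r-2}n)\equiv 2c(n)$, which is part~(3). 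For part~(2) I would iterate over distinct primes $\ell_1,\dots,\ell_{m_{p,t}}\equiv-1$: for $n$ coprime to their product each application of $T_{\ell_i}=T'_{\ell_i}$ simply rescales the index, so $(\Delta^{k_{p,t}}\mid T'_{\ell_1}\cdots T'_{\ell_{m_{p,t}}})(n)=c(\ell_1\cdots\ell_{m_{p,t}}n)$; being a product of $m_{p,t}$ operators from the nilpotent Hecke algebra, it is $\equiv0$, which is the claimed vanishing. In each case the concluding step is the translation back through the dictionary of Step~1.

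Parts~(1) and~(3) for $p\in\{5,7\}$ are essentially forced once the first two steps are in place. I expect the real work to lie in two places. For $p=3$ the coefficient dictionary carries an extra factor of $3$, so the nonzero coefficients of $\Delta^{k_{3,t}}$ occupy a single residue class modulo $3$, and one must check that $\ell^{p^r}n$ and $\ell^{p^r-2}n$ land in it and correspond to the asserted arguments of $a_{3^t}$; keeping this indexing straight is the delicate bookkeeping. The more substantial obstacle is part~(2): Theorem~\ref{simple_bound} bounds the single-operator index $N_\ell(\Delta^{k_{p,t}},p)$, whereas part~(2) requires the product of $m_{p,t}$ \emph{distinct} operators to annihilate $\Delta^{k_{p,t}}$, i.e.\ a bound on $N_A$ for the full algebra $A=\langle T_\ell:\ell\nmid pN\rangle$. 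I would address this by placing each $T'_{\ell_i}$ in the maximal ideal of the local Hecke algebra acting on $\widetilde{M}^{0,(p)}(\Gamma_0(1))=\mathbb{F}_p[\Delta]$ and arguing that the corresponding power of this ideal already kills $\Delta^{k_{p,t}}$.
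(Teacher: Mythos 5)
Your proposal is correct and follows the paper's strategy: identify $a_{p^t}(n-k_{p,t})$ with the coefficients of $\Delta^{k_{p,t}}\pmod p$, feed in Theorem \ref{simple_bound} to get $\Delta^{k_{p,t}}\mid(T'_\ell)^{p^r}\equiv 0$, and unwind via the congruence classes of $k_{5,t}$ and $k_{7,t}$, the sign of $\ell^{w-1}$, and $(T'_\ell+2)^{p^r}\equiv(T'_\ell)^{p^r}+2$. The one genuinely different ingredient is your derivation of
\begin{equation*}
c_{p^r}(n)\equiv c(\ell^{p^r}n)-\ell^{w-1}c(\ell^{p^r-2}n)\pmod{p}\qquad(\ell\nmid n),
\end{equation*}
where $w=12k_{p,t}$ is the weight: the paper proves a closed binomial-coefficient formula for every iterate $c_r(n)$ and every $v_\ell(n)$ by induction (Lemma \ref{iterated_coeff}) and then specializes, killing the interior terms with $\binom{p^r}{m}\equiv 0\pmod p$, whereas you work at the operator level, writing $T_{\ell^{s}}-\ell^{w-1}T_{\ell^{s-2}}=\alpha^{s}+\beta^{s}$ for the roots $\alpha,\beta$ of $X^2-T_\ell X+\ell^{w-1}$ and using the power-sum congruence $\alpha^{p^r}+\beta^{p^r}\equiv(\alpha+\beta)^{p^r}\pmod p$. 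Your route is shorter and more conceptual; the paper's lemma is more general than what is needed here.

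Two cautions. For part (2), your final sentence --- that the relevant power of the maximal ideal of the Hecke algebra already kills $\Delta^{k_{p,t}}$ --- is precisely the assertion that needs proof: the single-operator bounds $N_{\ell_i}(\Delta^{k_{p,t}},p)\le m_{p,t}$ do not by themselves control a mixed product of distinct operators. What closes this gap (and what the paper appeals to) is that the proof of Theorem \ref{simple_bound} is a degree-lowering filtration argument uniform in $\ell$: every $T'_\ell$ with $\ell\equiv\pm 1\pmod p$ maps each stage of the filtration into a strictly lower stage, so any word of length $m_{p,t}$ in these operators annihilates $\Delta^{k_{p,t}}$. You should make this uniformity explicit rather than citing local nilpotency of the algebra, which gives no quantitative bound. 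Second, for $p=3$ the dictionary reads $\Delta^{k_{3,t}}\equiv\sum a_{3^t}(n)\,q^{3n+k_{3,t}}\pmod 3$, so the coefficient of $q^M$ is $a_{3^t}\left(\tfrac{M-k_{3,t}}{3}\right)$, supported on $M\equiv k_{3,t}\pmod 3$; you rightly flag that matching the resulting arguments against the stated congruences is delicate, and this bookkeeping must actually be carried out rather than deferred.
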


{\bf Remarks.}
\begin{enumerate}
\item 
The integers $m_{p, t}$ in the theorem are bounds on $N_{\ell}(\Delta^{k_{p, t}}, p)$ from parts (1) and (2) of Theorem ~ \ref{simple_bound} when $p = 3$ and $p\in \{5, 7\}$, respectively.
\item
Analogues of parts (1) and (2) of the theorem hold for $p\in \{2, 3\}$.~Using part (1) of Theorem \ref{nilpotent_theorem}, the first author \cite{boylan} proved congruences of these types for $p = 2$ prior to the result of Nicolas and Serre in part (1) of Theorem \ref{known}.  Later, Chen \cite{chen2} used part~(1) of Theorem \ref{known} to sharpen the results in \cite{boylan}, settling a conjecture from \cite{H-S}.  Dai \cite{dai} used part (2) of Theorem \ref{known} to prove the analogue of part (2) for $p = 3$.  
\end{enumerate}

\medskip

We now let $r\geq 1$, and we define functions $p_r(n)$ by
\begin{equation} \label{gen_fun1}
\sum_{n \geq 0} p_{r}(n) q^{n} = \prod_{n \geq 1} \: (1 - q^{n})^r.
\end{equation}
We let $p_{e, r}(n)$ and $p_{o, r}(n)$ denote the number of $r$-colored partitions of $n\geq 1$ into even and odd numbers of parts, respectively, and we note that $p_r(n) = p_{e,r}(n) - p_{o, r}(n)$.  
Our second application of Theorem \ref{simple_bound} gives congruences for $p_{12r}(n) \Mod{3}$ when $\gcd(r, 6) = 1$.
\begin{theorem} \label{partition_3} 
Let $r\geq 1$ with $\gcd(r, 6) = 1$, let $u_r = 1 + \left\lfloor\frac{r}{3}\right\rfloor$, and let $\ell \neq 2$ be prime.  
\begin{enumerate}
\item Let $\ell \equiv -1 \pmod{3}$. Then for all $j$ with $3^{j}\geq u_{r}$ and for all $n \geq 1$ with $\ell\nmid 2n$, we have
\begin{equation*}
p_{12r} \left( \frac{\ell^{u_r}n - r}{2}  \right) \equiv \, 2p_{12r}\left(\frac{\ell^{u_r - 2}  n -r}{2}\right) \Mod{3}.
\end{equation*}
\item Let $\ell_1,\ldots, \ell_{u_{r}}\equiv -1\Mod{3}$ be distinct odd primes.  Then for all $n\geq 1$ relatively prime to $2\ell_1\cdots\ell_{u_r}$, we have
\begin{equation*}
p_{12r}\left(\frac{\ell_1\cdots\ell_{u_{r}} n - r}{2} \right) \equiv 0\Mod{3}.
\end{equation*}
\item Let $\ell \equiv 1 \pmod{3}$.  Then for all $j$ with $3^{j}\geq u_{r}$ and for all $n \geq 1$ with $\ell\nmid 2n$, we have
\begin{equation*}
p_{12r} \left( \frac{\ell^{3^{j}}n - r}{2}  \right)
- p_{12r}\left(\frac{\ell^{3^{j} - 2}n - r}{2}\right)
\equiv  2 p_{12r} \left( \frac{n - r}{2}  \right) \Mod{3}.
\end{equation*}
\end{enumerate}  
\end{theorem}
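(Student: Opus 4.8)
The plan is to realize $D_2^r$ as the generating function of $p_{12r}$ and then transport the nilpotency bound of Theorem~\ref{simple_bound}(3)(a) into coefficient congruences through the Hecke action modulo~$3$. First I would record the generating-function dictionary. Since $\eta(2z)^{12}=q\prod_{n\ge 1}(1-q^{2n})^{12}$, raising to the $r$-th power gives
\[
D_2^r = q^r\prod_{n\ge 1}(1-q^{2n})^{12r} = \sum_{s\ge 0} p_{12r}(s)\,q^{2s+r},
\]
so that, writing $D_2^r=\sum_m c(m)q^m$, we have $c(m)=p_{12r}\!\left(\frac{m-r}{2}\right)$ when $m\equiv r\pmod 2$ and $c(m)=0$ otherwise; as $r$ is odd, $c$ is supported on odd~$m$. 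Thus every congruence in the theorem is exactly a congruence among the coefficients $c(\ell^{a}n)$, $c(\ell^{b}n)$, and $c(n)$. Because $\gcd(r,6)=1$, a short eta-quotient character computation shows $D_2^r\in S_{6r}(\Gamma_0(4))$ has trivial nebentypus, and its reduction modulo $3$ lies in the space on which $T_\ell'$ acts locally nilpotently by Theorem~\ref{nilpotent_theorem}(3). Theorem~\ref{simple_bound}(3)(a) then gives $N_\ell(D_2^r,3)\le u_r=1+\lfloor r/3\rfloor$, i.e.\ $D_2^r\mid (T_\ell')^{u_r}\equiv 0\pmod 3$ for every prime $\ell\notin\{2,3\}$.

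Next I would set up the Hecke bookkeeping modulo~$3$. For $f=\sum a(n)q^n$ of weight $6r$ with trivial character on $\Gamma_0(4)$ and a prime $\ell\nmid 4$, the eigenvalue factor is $\ell^{6r-1}\equiv\ell\pmod 3$ (since $\ell^2\equiv 1$), so I put $\beta\equiv\ell\pmod 3$. I record the two facts I need: for $\ell\nmid n$ the genuine operator satisfies $(f\mid T_{\ell^m})(n)=a(\ell^m n)$, and the iterate $T_\ell^m$ equals the power sum $s_m$ of the roots $\alpha,\gamma$ of $X^2-T_\ell X+\beta$, which obeys $s_m=T_{\ell^m}-\beta\,T_{\ell^{m-2}}$. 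The crucial simplification is Frobenius modulo $3$: since $T_\ell=\alpha+\gamma$,
\[
T_\ell^{3^j}=(\alpha+\gamma)^{3^j}\equiv \alpha^{3^j}+\gamma^{3^j}=s_{3^j}\equiv T_{\ell^{3^j}}-\beta\,T_{\ell^{3^j-2}}\pmod 3.
\]

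For parts (1) and (3) I would fix a single $\ell$ and choose $j$ with $3^j\ge u_r$, so that $D_2^r\mid(T_\ell')^{3^j}\equiv 0$. When $\ell\equiv -1\pmod 3$ one has $T_\ell'=T_\ell$ and $\beta\equiv -1$; the displayed identity yields $D_2^r\mid T_{\ell^{3^j}}\equiv\beta\,D_2^r\mid T_{\ell^{3^j-2}}$, and reading the coefficient at $n$ with $\ell\nmid n$ gives $c(\ell^{3^j}n)\equiv 2\,c(\ell^{3^j-2}n)\pmod 3$, which is part~(1) after translating through $c$. When $\ell\equiv 1\pmod 3$ one has $\beta\equiv 1$ and $T_\ell'=T_\ell-2$; applying Frobenius to $(T_\ell-2)^{3^j}\equiv T_\ell^{3^j}-2^{3^j}\equiv T_\ell^{3^j}+1$ and using nilpotency gives
\[
D_2^r\mid T_{\ell^{3^j}}-D_2^r\mid T_{\ell^{3^j-2}}+D_2^r\equiv 0\pmod 3,
\]
whose coefficient at $n$ (with $\ell\nmid 2n$) reads $c(\ell^{3^j}n)-c(\ell^{3^j-2}n)\equiv 2\,c(n)$, which is part~(3).

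Finally, part~(2) is the genuinely new ingredient. With distinct primes $\ell_1,\dots,\ell_{u_r}\equiv -1\pmod 3$ (so each $T_{\ell_i}'=T_{\ell_i}$), multiplicativity of the genuine Hecke operators at an index $n$ coprime to $\ell_1\cdots\ell_{u_r}$ gives $(D_2^r\mid T_{\ell_1}\cdots T_{\ell_{u_r}})(n)=c(\ell_1\cdots\ell_{u_r}n)$, so it suffices that this length-$u_r$ product annihilate $D_2^r$ modulo $3$. This needs not the single-operator bound but the algebra bound $N(D_2^r,3)\le u_r$, that is, that \emph{any} composition of $u_r$ operators from $\langle T_\ell:\ell\nmid 6\rangle$ kills $D_2^r$. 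I expect this to be the main obstacle, and I would obtain it from the proof of Theorem~\ref{simple_bound}(3)(a): that bound is proved by a filtration/degree-lowering argument showing each $T_\ell'$ strictly drops the degree of a form in $D_2$, and since the size of the drop is uniform in $\ell$, the same count yields $N(D_2^r,3)\le u_r$. Granting this, the coefficient reading gives $c(\ell_1\cdots\ell_{u_r}n)\equiv 0$, completing part~(2).
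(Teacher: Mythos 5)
Your proof is correct and follows essentially the same route as the paper: identify $D_2^r$ with the generating function of $p_{12r}$ supported on odd exponents, invoke the bound $N_\ell(D_2^r,3)\le u_r$ from part (3a) of Theorem \ref{simple_bound}, reduce $(T_\ell')^{3^j}$ modulo $3$ to $T_{\ell^{3^j}}-\ell^{6r-1}T_{\ell^{3^j-2}}$ (plus the constant term when $\ell\equiv 1$), and for part (2) observe that the degree-lowering proof of the bound is uniform in $\ell$ and therefore annihilates $D_2^r$ under any length-$u_r$ mixed product of the $T_{\ell_i}$. Your power-sum/Frobenius derivation of the two-term coefficient identity is just a repackaging of the paper's Corollary \ref{coeff_corollary}.
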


\medskip

\subsection{Computations and Conjectures on Some Related Quantities}
Here we discuss quantities of interest that we encountered in our study of nilpotency, and we make some conjectures on these quantities.  We let $p\in \{5, 7\}$, and for all $k\geq 1$ and all primes $\ell\equiv\pm{1}~\pmod{p}$, we define
\begin{equation}\label{deg_lower}
{D_{\ell}(k, p) = \text{deg}_{\widetilde{\Delta}}(\widetilde{\Delta}^k\mid T_{\ell}^{'})},
\end{equation}
where $T_{\ell}^{'}$ denotes the modified Hecke operator as in \eqref{mod_Hecke}, and $\text{deg}_{\widetilde{\Delta}}(f)$ denotes the degree of $\widetilde{f} \in \mathbb{F}_p[\widetilde{\Delta}]$ in $\widetilde{\Delta}$.

\subsubsection{Some Conjectures Modulo $5$}
When $\ell = 19$ and $p = 5$, we define the modified degree-lowering function 
\begin{equation*}
D'_{19}(k, 5) = \begin{cases} D_{19}(k, 5) - 1, & 5\mid D_{19}(k, 5) \\
D_{19}(k, 5), & \text{otherwise}.
\end{cases}
\end{equation*}
For all $t\geq 1$, we denote by $D'^{(t)}_{\ell}(k, p)$ the $t$th iteration of $D_{\ell}(k, p)$.  The first part of Theorem~\ref{nilpotent_theorem} implies that $D_{\ell}'(k,p) < k$ and that there exists a least $j\geq 1$ with $D'^{(j)}_{\ell}(k,p) = -\infty$.
Our computations suggest precise conjectures on $D_{19}(k, 5)$ and on a second quantity, $S'_{19}(k, 5)$, the nilpotence index of $D_{19}^{'}(k, 5)$: 
\[
S_{19}'(k, 5) = \text{min}\left\{t\geq 1 : D_{19}'^{(t)}(k, 5) = -\infty\right\}.
\]
Furthermore, our computations suggest that $N_{19}(\widetilde{\Delta}^k,5) \leq S_{19}'(k, 5)$.  For $n\geq 1$, we define $v_5(n)$ to be the largest power of $5$ dividing $n$.  

\medskip

\begin{conjecture} \label{conj}
Assume the notation above.  
\begin{enumerate}
\item Let $k\geq 1$ with $5\nmid k$.  Then the following formulas hold for $D_{19}(k, 5)$: 
\renewcommand{\arraystretch}{1.5} % Adjust the vertical spacing
\setlength{\tabcolsep}{12pt} % Adjust the horizontal spacing
\begin{table}[ht] 
  \centering
  %\resizebox{1.3\textwidth}{!}{%
 % \resizebox{\textwidth}{!}{%
\begin{tabular}{|c|c|c|c|} 
\hline
$k$ & $D_{19}(k, 5)$ \\ 
\hline
$k \equiv j \Mod{5}, j \in \{1,2\}$ & $k - \left( \frac{5^{v_{5}(k - j)} + 1}{3} \right), v_{5}(k - j) \equiv 1 \Mod{2}$ \\
& $k - \left( \frac{5^{v_{5}(k - j)} + 5}{3} \right), v_{5}(k - j) \equiv 0 \Mod{2}$ \\
\hline
$k \equiv 3,4 \Mod{5}$, $k \not\in \{13, 14\} \Mod{25}$ & $k - 2$ \\
\hline
$k \equiv 13 \Mod{25}$ & $k - 3$ \\
\hline
$k \equiv 14 \Mod{25}$ & $k - 4$ \\
\hline
\end{tabular}
 % }
\caption{Degree $D_{19}(k, 5)$ of $\widetilde{\Delta}^k \mid T_{19}$ in $\mathbb{F}_5\llbracket q\rrbracket$}
\label{tab: degree}
\end{table}
\item Suppose that $k$ has base $5$ expansion $k = \sum a_i\cdot 5^i$, and that $s = a_0 + a_1 \cdot 5$.  For all $t\geq 0$, let $c_t = S_{19}^{\prime}(5^t + 1, 5) - 1$. Then we have
\begin{itemize}
\item $c_0 = 0$, $c_1 = 2$, and $c_i = 3c_{i - 1} + 2c_{i - 2}$ for all $i\geq 2$, and 
\item $S_{19}^{\prime}(k, 5) = S_{19}^{\prime}(s, 5) + \underset{i \geq 2} \sum a_ic_i$.
\end{itemize}
\item We have $S_{19}^{\prime}(k, 5) = O(k^{\alpha})$, 
where $\alpha = \log_5\left(\frac{3 + \sqrt{17}}{2}\right) \approx 0.7892$.  
\item We have $N_{19}(\widetilde{\Delta}^k, 5) \leq S_{19}^{\prime}(k, 5).$
\end{enumerate}
\end{conjecture}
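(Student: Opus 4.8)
The plan is to control the $\Delta$-degree of the iterates $\Delta^k \mid T_{19}^t$ and to show that this degree falls at least as fast as the iteration of the modified map $D'_{19}(\cdot,5)$, so that the $T_{19}$-orbit of $\Delta^k$ is exhausted within $S'_{19}(k,5)$ steps. Since $19 \equiv -1 \pmod 5$ we have $T'_{19} = T_{19}$, and by part (1) of Theorem~\ref{nilpotent_theorem} the operator $T_{19}$ acts locally nilpotently on $\mathbb{F}_5[\Delta]$. The first reduction is to prove that $T_{19}$ strictly lowers the $\Delta$-degree on $\mathbb{F}_5[\Delta]$ (the filtration statement underlying the already-recorded assertion $D'_{19}(k,5)<k$), so that the degrees $\deg_\Delta(\Delta^k\mid T_{19}^t)$ are strictly decreasing in $t$ until they reach $-\infty$ at $t=N_{19}(\Delta^k,5)$. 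It then suffices to bound the number of distinct degrees occurring along the orbit.

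The heart of the argument is a degree-recursion lemma for genuine orbit elements: for $g = \Delta^k\mid T_{19}^{t}$ with $t\geq 1$ and $\deg_\Delta g = d$, I would show
\[
\deg_\Delta(g\mid T_{19}) \leq D'_{19}(d,5),
\]
that is, the degree drops to $D_{19}(d,5)$ and drops by one further exactly when $5\mid D_{19}(d,5)$. The extra drop is where the prime in $D'$ enters, and I expect it to come from a leading-coefficient computation: the coefficient of $\Delta^{D_{19}(d,5)}$ in $g\mid T_{19}$ should vanish modulo $5$ precisely when $D_{19}(d,5)\equiv 0\pmod 5$. This phenomenon must be special to the iterated (non-monomial) forms, since for the pure monomial $\Delta^k$ the degree of $\Delta^k\mid T_{19}$ is $D_{19}(k,5)$ by the very definition \eqref{deg_lower}; thus the first step is governed by $D_{19}$ and all later steps by $D'_{19}$. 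Granting the lemma, one obtains $\deg_\Delta(\Delta^k\mid T_{19}^t)\leq D'^{(t-1)}_{19}(D_{19}(k,5),5)$ for all $t\geq 1$, and hence $N_{19}(\Delta^k,5)\leq 1+S'_{19}(D_{19}(k,5),5)$.

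To finish, I would prove the combinatorial inequality $1+S'_{19}(D_{19}(k,5),5)\leq S'_{19}(k,5)$ from the explicit digit formula in part (2) of the conjecture. Since $S'_{19}(k,5) = 1 + S'_{19}(D'_{19}(k,5),5)$ by definition, this reduces to $S'_{19}(D'_{19}(k,5),5)\geq S'_{19}(D_{19}(k,5),5)$, which is an equality when $5\nmid D_{19}(k,5)$ and reduces to $S'_{19}(D_{19}(k,5)-1,5)\geq S'_{19}(D_{19}(k,5),5)$ in the critical case $5\mid D_{19}(k,5)$, a statement about base-$5$ digits that the recurrence $c_i = 3c_{i-1}+2c_{i-2}$ is designed to encode. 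To organize both the degree-recursion lemma and this digit comparison, I would exploit the self-similarity of the $T_{19}$-action under $k\mapsto 5k$ coming from $\Delta^{5m}\equiv(\Delta^m)^5\pmod 5$, mirroring the $p\mid k$ clauses of Theorem~\ref{simple_bound}(2) and the splitting $S'_{19}(k,5) = S'_{19}(s,5)+\sum_{i\geq 2}a_ic_i$ over the base-$5$ digits, exactly as in the Nicolas--Serre analysis for $p=2$ in Theorem~\ref{known}(1); this should reduce the lemma to a finite check on the last two base-$5$ digits.

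The main obstacle, and the reason the statement remains conjectural, is controlling the lower-order terms of the iterates. Because $D_{19}(\cdot,5)$ is \emph{not} monotonic---for instance $D_{19}(5^{2s+1}+1,5)$ lies far below $5^{2s+1}+1$ while the neighboring exponents $\equiv 3,4\pmod 5$ drop by only a bounded amount---a polynomial of degree $d$ may carry a lower monomial $\Delta^j$ with $D_{19}(j,5)>D_{19}(d,5)$, so the bound $\deg_\Delta(g\mid T_{19})\leq D'_{19}(d,5)$ simply fails for generic $g$ of degree $d$. The degree-recursion lemma can hold only because the specific iterates $\Delta^k\mid T_{19}^t$ have tightly constrained support. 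The crux is therefore a support lemma asserting that each such iterate is supported on exponents $j$ with $D_{19}(j,5)\leq D'_{19}(d,5)$, and that this property is preserved under a further application of $T_{19}$; proving this stable support invariant requires understanding the full $q$-expansion structure of the orbit rather than its leading term alone, and it is the step I expect to be hardest.
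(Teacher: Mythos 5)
This statement is Conjecture \ref{conj}: the paper contains no proof of it. The authors verify it computationally for $k\le 8\times 10^4$ and state in the remarks that the natural attack --- induction via the order-$20$ recurrence $\Delta^k\mid T_{19}\equiv\sum_{1\le i\le 20}a_i(\Delta)\,\Delta^{k-i}\mid T_{19}\pmod 5$, in the style of Nicolas--Serre --- is one they could write down but not carry through, owing to the number of terms and the complexity of the coefficients. So there is no argument of the paper's to compare yours against; a complete proof would be new mathematics.

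Your proposal is a plan rather than a proof, and you name the fatal gap yourself: the degree-recursion inequality $\deg_\Delta(g\mid T_{19})\le D_{19}'(\deg_\Delta g,5)$ is false for generic $g\in\mathbb{F}_5[\Delta]$ of the given degree because $D_{19}(\cdot,5)$ is not monotone, so everything rests on an unproven ``stable support invariant'' for the iterates $\Delta^k\mid T_{19}^t$, and neither you nor the paper supplies it. Beyond that, your argument for part (4) consumes parts (1) and (2) as inputs (the table for $D_{19}(k,5)$ and the digit formula for $S_{19}'$), but those are themselves the conjectural content and are not addressed: the Frobenius self-similarity $\Delta^{5m}\equiv(\Delta^m)^5\pmod 5$ does give $D_{19}(5m,5)=5\,D_{19}(m,5)$ (since $V_5$ commutes with $T_{19}$), but it does not by itself produce the additive digit decomposition $S_{19}'(k,5)=S_{19}'(s,5)+\sum_{i\ge 2}a_ic_i$ or the recurrence $c_i=3c_{i-1}+2c_{i-2}$; in the mod-$2$ prototype those required Nicolas and Serre's full structural description of the Hecke algebra, so ``reduce to a finite check on the last two base-$5$ digits'' is far from automatic. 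Part (1), the explicit formula for $D_{19}(k,5)$, is untouched, and part (3) follows only once part (2) is known. The outline is a sensible research program consistent with the numerics, but as it stands the statement must remain a conjecture.
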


\bigskip

\noindent
{\bf Remarks.} 
\begin{enumerate}
\item Using PARI/GP \cite{PARI2}, we verified the conjecture for $k\leq 8 \times 10^4$.
\item The quadratic irrational $\frac{3 + \sqrt{17}}{2}$ in the argument of the logarithm in part (3) is the dominant root of the polynomial $x^2 - 3x - 2$, which is the characteristic polynomial of the recurrence in part (2).  
\item Following Nicolas and Serre \cite{N-S-2}, one approach to proving the formulas in Table \ref{tab: degree} might be to argue by induction using the recurrence satisfied by $\widetilde{\Delta}^k \mid T_{19}$ as in \eqref{recurrence}. As in Chapter 6 of \cite{medthesis}, one can explicitly write down this order $20$ recurrence and its associated characteristic polynomial.  However, its relatively large number of terms and the complexity of its coefficients prevented us from effectively pursuing this approach to prove the conjectured formulas.  
\item We conjecture a more complicated formula for $D_{11}(k, 5)$, which we omit for brevity.
\end{enumerate}

\subsubsection{Computations Modulo $7$}
Our computations suggest a similar formula for the nilpotence index of a modified degree-lowering function in the mod $7$ setting, which we briefly describe.  With $D_{29}(k, 7)$ as in \eqref{deg_lower}, we define $D_{29}''(k,7)$ to be the degree of the second highest nonzero term in $\widetilde{\Delta}^k\mid T'_{29}\in \mathbb{F}_7[\widetilde{\Delta}]$ if $7\mid D_{29}(k, 7)$ or if $D_{29}(k, 7)\equiv 5 \ (\text{mod} \ 98)$; otherwise, we define $D''_{29}(k, 7)$ to be $D_{29}(k, 7)$.   
We also define
\[
S''_{29}(k,7) = \text{min}\{s : D''^{(s)}_{29}(k, 7) = -\infty\}.
\]
\noindent
We suppose that $7 \nmid k$, that $k$ has base $7$ expansion $k = \sum a_i\cdot 7^i$, and that $s\equiv k\pmod{98}$ with $1\leq s\leq 97$.  For all $t\geq 1$, we let $y_{t} = S_{29}^{\prime\prime}(2\cdot 7^{t} + 1, 7) - 1$. Then for $1 \leq k\leq 3.5 \times 10^4$ with certain exceptions depending on the residue class of $k \pmod{2\cdot 7^j}$ for $j\geq 2$, our computations give 
\[
S_{29}^{\prime\prime}(k, 7) = S_{29}^{\prime\prime}(s, 7) + \underset{i \geq 2}\sum x_{i}\,y_{i}, 
\]
where the $x_{i}$ are linear combinations of $\lfloor \frac{a_{i}}{2} \rfloor$ and $\lfloor \frac{a_{i} + 7}{2} \rfloor$ depending on the parity of $a_{i}$.  
For brevity, we give one example of an exception.  When $k \equiv 5 \Mod{98}$, we have $S_{29}^{"}(\Delta^k, 7) = 1 + S_{29}^{"}(\Delta^s, 7) + \displaystyle{\sum_{i \geq 2}  x_{i} y_{i}}$.

Our computations also suggest that $y_{i}$ satisfies the recurrence relation
\[
y_{1} = 3, \, y_{2} = 16, \, \text{and} \, y_{i} = 5\,y_{i - 1} + 2\,y_{i - 2} \ \text{for $i \geq 3$}; 
\]
that $S_{29}^{\prime\prime}(k, 7) = O(k^{\alpha})$, where $\alpha = \log_{7}\left( \frac{5 + \sqrt{33}}{2} \right)\approx 0.864$; and that $N_{29}(\widetilde{\Delta}^k, 7) \leq S_{29}^{\prime\prime}(k, 7)$. The quadratic irrational argument in the logarithm arises as the dominant root of the characteristic polynomial for the recurrence satisfied by $\{y_i\}$.  We note that a similar recurrence relation can be conjectured for $\ell = 13$, but we omit it due to the complicated definition of $D_{13}''(k,7)$. 
%\sw{It looks like this: we define $D_{13}''(k,7)$ to be the degree of the second highest nonzero %term in $\widetilde{\Delta}^k\mid T'_{13}\in \mathbb{F}_7[\widetilde{\Delta}]$ if %$7\mid D_{13}(k, 7)$ or if $D_{13}(k, 7)\equiv 36 \ (\text{mod} \ 98)$ or ($D_{13}(k, %7)\equiv 1 \ (\text{mod} \ 14)$ and $D_{13}(k, 7) \neq 1$ and $D_{13}(k, 7) %\not\equiv 1 \ (\text{mod} \ 98))$ otherwise we define $D''_{13}(k, 7)$ to be $D_{13}
%(k, 7)$. For $\ell \in \{13, 29\}$,}
 
\subsubsection{Computations Modulo $3$}
We recall that $D_2(z) = \eta(2z)^{12} \in S_6(\Gamma_0(4))$.  Part 3 of Theorem~\ref{simple_bound} gives bounds on 
$N_{\ell}(\widetilde{D_2}^k, 3)$ for all $k\geq 1$ and for primes $\ell\not\in\{2, 3\}$.  While our proof does not require $\langle T_{\ell} : \ell\geq 5\rangle$ - invariance of the $\mathbb{F}_3$-module generated by $\{D_2^k : \gcd(k, 6)\}$, computations with $k\leq 4\times 10^4$ and small $\ell$ suggest that it is, indeed, invariant.  Under this assumption, for all $k$ and for all primes $\ell\geq 5$, we define 
\[
E_{\ell}(k, 3) = \deg_{\widetilde{D_{2}}}(\widetilde{D_{2}}^{k} \mid T_{\ell}')
\]
and its modification
\begin{equation*}
E_{\ell}^{\prime}(k, 3) :=
\begin{cases}
E_{\ell}(k, 3) - 1, & 3\mid E_{\ell}(k, 3), \\
E_{\ell}(k, 3), & \text{otherwise},
\end{cases}
\end{equation*}
and we define $S'''_{\ell}(k,3)$ to be the nilpotence index of $E_{\ell}'(k, 3)$.
We suppose that $\gcd(k, 6) = 1$, that $k = \sum a_i\cdot 3^i$, and that $s \equiv k\pmod{54}$ with $1\leq s\leq 53$.  For $\ell \in \{7, 11\}$ and for all $t\geq 1$, we let $z_{\ell, t} = S_{\ell}^{\prime\prime\prime}(2\cdot 3^t + 1, p) - 1$. For $k\leq 4\cdot 10^4$ and $\ell = 7$, our computations show that 
\[
S_{\ell}^{\prime\prime\prime}(k, p) = S_{\ell}^{\prime\prime\prime}(s, p) + \underset{i \geq 3}\sum w_{\ell, i}\, z_{\ell, i}, 
\]
where the $w_{\ell, i}$ are linear combinations of the coefficients $a_i$ which do not appear to satisfy obvious general formulas.  When $\ell = 11$, the formula holds for all $k\not\equiv 7, 11\pmod{54}$ and for $k$ not of the form $\frac{3^m + 1}{4}$ where $m\geq 1$ is odd.  
%When $k\equiv 7, 11\pmod{54}$, we observe that the right side is an upper bound on $S_{11}^{\prime\prime\prime}(k, p) + 2$. When $k = \frac{3^m + 1}{4}$, we have $N_{\ell}(D_2^k, p) = 1$ for all primes $\ell \equiv 2 \pmod{3}$ as in part (1b) of Proposition \ref{vanishing}.  
Our computations also suggest that 
\[
z_{7, 1} = 1, z_{7, 2} = 2, \ \text{and} \ z_{7, i} = z_{7, i - 1} + 2z_{7, i -2} \ \text{for $i \geq 3$}; 
\]
\[
z_{11, 2} = 2, \ \text{and} \ z_{11, i} =  2z_{11, i - 1} \ \text{for $i \geq 3$};
\]
that for $\ell \in \{7, 11\}$, we have $S_{\ell}^{\prime\prime\prime}(k, p) = O(k^{\alpha})$, where $\alpha = \log_3 2 \approx 0.631$; and that $N_{\ell}(\widetilde{D_{2}}^k, p) \leq S_{\ell}^{\prime\prime\prime}(k, p).$

\medskip

\subsection{Plan for the Paper}
We structure the rest of the paper as follows.  In Section \ref{background}, we give necessary background, and in Section 3, we prove Theorem \ref{simple_bound}, Proposition \ref{vanishing}, and Theorems \ref{partition_5} and \ref{partition_3}.

Before we proceed, we give a brief overview of the proof of Theorem \ref{simple_bound}.  When $T_{\ell}'$ acts nilpotently mod $\ell$ on a subspace $\mathbb{F}_p[\widetilde{f}]\subseteq \widetilde{M}^{(p)}(\Gamma_0(N))$, it lowers degrees in $\widetilde{f}$.  To prove the theorem, we show that $T_{\ell}'$ lowers degrees by at least two, uniformly for all $\ell \neq p$, when $(p, N)\in\{(3, 1), (5, 1), (7, 1), (3, 4)\}$.  When $N = 1$, we decompose $\widetilde{M}^{0, (p)}(\Gamma_0(1)) = \mathbb{F}_p[\widetilde{\Delta}]$ into subspaces of forms whose expansions in $\mathbb{F}_p\llbracket q\rrbracket$ have support on squares mod $p$, non-squares mod~$p$, and multiples of $p$.  The residue class of $\ell$ modulo $p$ determines how $T_{\ell}'$ maps between these subspaces.  

When $(p, N) = (3, 4)$, with $F(z)$ as in \eqref{defn_G}, we note that $\widetilde{M}^{(3)}(\Gamma_0(4)) = \mathbb{F}_3[\widetilde{F}]$ as in \eqref{level_4_gen}, and that $\widetilde{D}_2(z) = \widetilde{F}(z) - \widetilde{F}^3(z)$ in $\mathbb{F}_3\llbracket q\rrbracket$ as in \eqref{D2_equivalence}.  We adapt arguments of Monsky \cite{monsky} to study how $T_{\ell}'$ maps between the subspace $W_1$ generated by $\widetilde{D}_2^k$ for $k\equiv 1\pmod{6}$ and $\widetilde{D}_2^i\widetilde{F}^3$ for $i\equiv 4\pmod{6}$ and the subspace $W_5$ generated by $\widetilde{D}_2^k$ for $k\equiv 5\pmod{6}$ and $\widetilde{D}_2^i\widetilde{F}^3$ for $i\equiv 2 \pmod{6}$.  In both the $N = 1$ and $N = 4$ settings, the mapping properties of Hecke operators on our subspaces allow us to prove our degree-lowering results.

\medskip

\begin{center}
    \footnotesize{ACKNOWLEDGEMENTS}
\end{center}
We thank Anna Medvedovksy for her helpful comments and discussions.  We also thank the referee, whose suggestions improved the paper's presentation.

%%%%%%%%%%%%%%%%%%%%%%%
%Background
%%%%%%%%%%%%%%%%%%%%%%%

\section{Background} \label{background}
For details on modular forms, one may consult \cite{Cohen2017ModularFA}, for example.  When $N$ and $k$ are integers with $N\geq 1$, we denote the space of weight $k$ holomorphic modular forms on $\Gamma_0(N)$ by $M_k(\Gamma_0(N))$, and we denote its subspace of cusp forms by $S_k(\Gamma_0(N))$.
We introduce here some of the forms in these spaces that we require.  We let $z\in \mathbb{H}$, the complex upper half-plane, and we let $q = e^{2\pi iz}$.  For even $k\geq 2$, the level one Eisenstein series in weight $k$ is
\[
E_{k}(z) = 1 - \frac{2k}{B_{k}} \sum_{n = 1}^{\infty}\sum_{d\mid n}d^{k - 1} \, q^{n},
\]
where $B_{k}$ denotes the $k$th Bernoulli number. When $k\geq 4$, we have $E_{k} \in M_{k}(\Gamma_{0}(1))$; when $k = 2$ and $N \geq 2$, we have
\begin{equation}
E_{2, N}(z) = N E_{2}(N z) - E_{2}(z) \in M_{2}(\Gamma_{0}(N)). \label{Eisenstein}
\end{equation}
The Dedekind eta-function is defined by
\begin{equation}
\eta(z) = q^{1/24}\prod_{n=1}^{\infty}(1 - q^n). \label{eta}
\end{equation}
The Delta-function,
\begin{equation}
\Delta(z) = \eta(z)^{24} = q\prod_{n = 1}^{\infty}(1 - q^n) \in S_{12}(\Gamma_0(1)), \label{delta}
\end{equation}
plays an important role in our work. 
When $D$ is a discriminant of a quadratic number field, we let $\chi_D$ denote the corresponding Kronecker character.  Some classical identities relating the eta-function to theta functions are
\begin{align}
&\eta(z) = \underset{n\equiv 1\Mod{6}}{\sum_{n\in \Z}}\chi_{12}(n)q^{\frac{n^2}{24}}, \label{eta_expansion}\\
&\eta(z)^3 = \underset{n \ \text{odd}}{\sum_{n\geq 1}}\chi_{-4}(n) \:n \: q^{\frac{n^2}{8}}, \label{eta3_expansion} \\
&\Theta(z) = \frac{\eta(2z)^{5}}{\eta(z)^2 \: \eta(4z)^2} = \sum_{n\in \Z}q^{n^2}. \label{theta}
\end{align}
We note that $\Theta(z)$ is a modular form of weight $1/2$ and level $4$ in Shimura's theory.  

When $p\geq 5$ is prime, we recall some facts in the $N = 1$ setting following \cite{serre} and \cite{swd}.  Since $\widetilde{E}_{p - 1}= 1$, we have $\widetilde{M}_k^{(p)}(\Gamma_0(1))\subseteq 
\widetilde{M}_{k + p - 1}^{(p)}(\Gamma_0(1))$ for all even $k\geq 0$, and we define
\[
\widetilde{M}^{\alpha, (p)}(\Gamma_{0}(1)) = \bigcup\limits_{k \equiv \alpha \Mod{p-1}} \:  \widetilde{M}_{k}^{(p)}(\Gamma_{0}(1)).
\]
Lemma 5(ii) of \cite{swd} implies that
\begin{equation}
\theta = q\frac{d}{dq} \, : \,\widetilde{M}_{k}^{(p)}(\Gamma_0(1)) \longrightarrow \widetilde{M}_{k + p + 1}^{(p)}(\Gamma_0(1)). \label{theta_operator}
\end{equation}

We next discuss Hecke operators.  We let $\ell\nmid N$ be prime, and we let $f(z) = \sum c(n)q^n \in M_k(\Gamma_0(N))$.  We define Hecke operators $T_{\ell}$ on $M_k(\Gamma_0(N))$ by
\begin{equation}\label{Hecke_defn}
f\mid T_{\ell} = \sum_{n} (c(\ell n) + \ell^{k - 1} c(n/\ell)) \: q^{n},
\end{equation}
where $c(n/\ell) = 0$ if $\ell \nmid n$.  The Hecke operator $T_{\ell}$ maps $M_k(\Gamma_0(N))$ to itself.  We let $r\geq 0$, and for all $n\geq 0$, we define $c_r(n)$ by 
\begin{equation}\label{iteration}
f(z)\mid T_{\ell}^r = \sum_{n} c_r(n)q^n,
\end{equation}
where $T_{\ell}^r$ denotes the $r$th iteration of $T_{\ell}$.  For all integers $m$, we let $v_{\ell}(m)$ denote the $\ell$-adic valuation of $m$.  For our applications, we require formulas for $c_r(n)$.  The following lemma gives such formulas depending on $v_{\ell}(n)$.  

\begin{lemma} \label{iterated_coeff}
Let $\ell\nmid N$ be prime, let $r, s\geq 0$; for all $n\geq 0$, let $c_r(n)$ be as in \eqref{iteration}; and suppose that $v_{\ell}(n) = s$.  Then we have 
\begin{align*}
c_r(n) = 
\sum_{i = 0}^{s} \binom{r}{i}\ell^{i(k - 1)}c(\ell^{r - 2i}n)
+ \sum_{j = s + 1}^{\lfloor{\frac{r + s}{2}\rfloor}}\left(\binom{r}{j} - \binom{r}{j - s - 1}
\right)\ell^{j(k - 1)}c(\ell^{r - 2j}n),
\end{align*}
where the second sum is zero when it is undefined (i.e., when $s + 1 > \lfloor\frac{r + s}{2}\rfloor$).  
\end{lemma}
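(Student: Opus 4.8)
The plan is to prove the formula by induction on $r$, using the fundamental recursion that results from applying $T_\ell$ one more time. Writing $\beta = \ell^{k-1}$, the definition \eqref{Hecke_defn} applied to $f\mid T_\ell^r = \sum_n c_r(n) q^n$ gives, for every $n\geq 0$,
\[
c_{r+1}(n) = c_r(\ell n) + \beta\, c_r(n/\ell),
\]
with the convention $c_r(n/\ell) = 0$ when $\ell\nmid n$. The base case $r=0$ is immediate: the first sum collapses to its $i=0$ term $c(n)$, and the second sum is empty since $s+1 > \lfloor s/2\rfloor$, so $c_0(n) = c(n)$ as required.

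For the inductive step I would fix $n$ with $v_\ell(n) = s$ and feed the inductive hypothesis into the recursion. Note that $v_\ell(\ell n) = s+1$ and, when $s\geq 1$, $v_\ell(n/\ell) = s-1$; the case $s = 0$ must be handled separately because then $c_r(n/\ell) = 0$. Substituting the claimed expression for $c_r(\ell n)$ (valuation $s+1$) and for $c_r(n/\ell)$ (valuation $s-1$), every resulting term is a multiple of some $c(\ell^{r+1-2t}n)$, and every such contribution automatically carries the factor $\beta^t$. The heart of the argument is to collect, for each $t$, the total coefficient of $c(\ell^{r+1-2t}n)$ and to match it against the target coefficient, namely $\binom{r+1}{t}$ for $0\leq t\leq s$ and $\binom{r+1}{t} - \binom{r+1}{t-s-1}$ for $s+1\leq t\leq \lfloor\frac{r+1+s}{2}\rfloor$. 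After the reindexings $i=t$ and $i=t-1$ (respectively $j=t$ and $j=t-1$) coming from the two summands of the recursion, each matching reduces to one or two applications of Pascal's rule $\binom{r+1}{t} = \binom{r}{t} + \binom{r}{t-1}$, together with the vanishing conventions $\binom{r}{-1}=0$ and $\binom{r}{t}=0$ for $t>r$.

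The main obstacle is purely bookkeeping: one must track the index ranges carefully so that the lower and upper limits of the four contributing sums line up. The two delicate points are (i) the transition index $t=s+1$, where the target switches from the first to the second sum and where exactly one ``free'' contribution (from $c_r(\ell n)$) combines with one ``reflected'' contribution (from $\beta c_r(n/\ell)$) to produce the extra $-\binom{r+1}{0}$; and (ii) the agreement of the upper limits, which works because the second sums of $c_r(\ell n)$ and of $\beta c_r(n/\ell)$ both terminate at $\lfloor\frac{r+s+1}{2}\rfloor = \lfloor\frac{r+1+s}{2}\rfloor$. Verifying (i) and (ii) for general $s$, and separately dispatching the boundary case $s=0$ (where the recursion reduces to $c_{r+1}(n) = c_r(\ell n)$ with $v_\ell(\ell n) = 1$), completes the induction.

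As a guide and consistency check, I note the combinatorial meaning of the formula. Writing $n = \ell^s m$ with $\ell\nmid m$, the operator $T_\ell$ acts on the sequence $\bigl(c(\ell^t m)\bigr)_{t\geq 0}$ as $U + \beta D$, where $U,D$ are the up and down shifts and $D$ annihilates the boundary term. Thus $c_r(\ell^s m)$ counts weighted lattice paths of $r$ steps $\pm 1$ starting at height $s$, with each down-step weighted by $\beta$ and with paths dipping below $0$ killed. The coefficient $\binom{r}{j} - \binom{r}{j-s-1}$ of $c(\ell^{r-2j}n)$ is precisely the count supplied by the reflection principle across the barrier at $-1$, and the expression is ``free'' (no reflection term) exactly when $s\geq r-1$, i.e., when the walk cannot reach the barrier within $r$ steps, matching the stated emptiness condition $s+1 > \lfloor\frac{r+s}{2}\rfloor$ for the second sum. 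This picture makes every index range transparent and could be turned into a self-contained alternative proof via the reflection principle if a less computational argument were preferred.
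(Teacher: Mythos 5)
Your proposal is correct and follows essentially the same route as the paper: induction on $r$ via the recursion $c_{r+1}(n) = c_r(\ell n) + \ell^{k-1}c_r(n/\ell)$, applying the inductive hypothesis at valuations $s+1$ and $s-1$ and recombining the coefficients with Pascal's identity. The reflection-principle interpretation you sketch at the end is a nice additional perspective not present in the paper, but your main argument coincides with the paper's proof.
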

\begin{proof}
We induct on $r$.  The base case follows directly from \eqref{Hecke_defn}.  We let $t\geq 1$, and we suppose that the statement holds for $0\leq r\leq t$.  We fix $s\geq 0$. For $n\geq 0$ with $v_{\ell}(n) = s\geq 0$, using \eqref{Hecke_defn} and the induction hypothesis, we write $c_{t + 1}(n) = c_t(\ell n) +\ell^{k - 1}c_{t}\left(\frac{n}{\ell}\right)$ in terms of the coefficients~$c(n)$.  Since $v_{\ell}(\ell n) = s + 1$ and $v_{\ell}\left(\frac{n}{\ell}\right) = s -1$, the induction hypothesis gives
\begin{align*}
& c_t(\ell n) = \sum_{i = 0}^{s+1}\binom{t}{i}\ell^{i(k - 1)}c(\ell^{t+1 - 2i}n)
+ \sum_{j = s+2}^{\lfloor\frac{t+s+1}{2}\rfloor}\left(\binom{t}{j} - \binom{t}{j - s -2}\right)\ell^{j(k - 1)}c(\ell^{t +1 -2j}n), \\
& c_{t}\left(\frac{n}{\ell}\right) 
= \sum_{i = 0}^{s - 1}\binom{t}{i}\ell^{i(k - 1)}c(\ell^{t - 1 -2i}n)
+ \sum_{j = s}^{\lfloor\frac{t+s-1}{2}\rfloor}\left(\binom{t}{j} - \binom{t}{j - s}\right)\ell^{j(k - 1)}c(\ell^{t - 1 - 2j}n).
\end{align*}
It follows that 
\begin{align*}
c_{t + 1}(n) & = c_{t}(\ell n) + \ell^{k - 1}c_t\left(\frac{n}{\ell}\right) \\
& = \sum_{i = 0}^{s+1}\binom{t}{i}\ell^{i(k - 1)}c(\ell^{t+1 - 2i}n)
+ \sum_{j = s+2}^{\lfloor\frac{t+s+1}{2}\rfloor}\left(\binom{t}{j} - \binom{t}{j - s -2}\right)\ell^{j(k - 1)}c(\ell^{t +1 -2j}n) \\
& + \sum_{i = 0}^{s - 1}\binom{t}{i}\ell^{(i + 1)(k - 1)} c(\ell^{t - 1 - 2i}n)
+ \sum_{j = s}^{\lfloor\frac{t+s-1}{2}\rfloor}\left(\binom{t}{j} - \binom{t}{j - s}\right)\ell^{(j +1)(k - 1)} c(\ell^{t - 1 - 2j}n) \\
& = c(\ell^{t + 1}n) + \sum_{i = 1}^{s}\left(\binom{t}{i} + \binom{t}{i - 1}\right)\ell^{i(k - 1)}c(\ell^{t + 1 - 2i}n) \\
& + \left(\binom{t}{s + 1} + \binom{t}{s} - \binom{t}{0}\right)\ell^{(s + 1)(k - 1)}c(\ell^{t + 1 - 2(s + 1)}n) \\
& + \sum_{j = s + 2}^{\lfloor\frac{t + s + 1}{2}\rfloor}
\left(\left(\binom{t}{j} + \binom{t}{j - 1}\right) - \left(\binom{t}{j - s - 1} + \binom{t}{j - s - 2}\right)\right)\ell^{j(k - 1)}c(\ell^{t + 1 - 2j}n) \\
& = \sum_{i = 0}^{s}\binom{t + 1}{i}\ell^{i(k - 1)}c(\ell^{t + 1 - 2i}n)
+ \sum_{j = s+1}^{\lfloor{\frac{t + s + 1}{2}\rfloor}}
\left(\binom{t + 1}{j} - \binom{t + 1}{j - s - 1}\right)\ell^{j(k - 1)}c(\ell^{r - 2i}n)
\end{align*}
where the third equality follows from shifting $j$ to $j - 1$ in the second sum and combining terms, and the final equality uses Pascal's identity.  
\end{proof}
\noindent 
We have a particular interest in the following special case of the lemma.  
\begin{corollary}\label{coeff_corollary}
Assume the notation in Lemma \ref{iterated_coeff}.  Let $r\geq 1$, let $\ell$ and $p$ be distinct primes with $p\ell\nmid N$, and suppose that $f(z) = \sum c(n)q^n \in M_{k}(\Gamma_{0}(N))_{(p)}$.  Then for all $n\geq 0$ with $\ell\nmid n$, we have 
\begin{equation*}
c_{p^r}(n) \equiv c(\ell^{p^r}n) - \ell^{k-1}c(\ell^{p^r - 2}n) \pmod{p}.
\end{equation*}
\end{corollary}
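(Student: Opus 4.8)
The plan is to specialize Lemma~\ref{iterated_coeff} to $s = 0$ and to the exponent $p^r$, and then to reduce the resulting formula modulo $p$ using a standard fact about binomial coefficients. Since $\ell \nmid n$, we have $s = v_{\ell}(n) = 0$, so the first sum in Lemma~\ref{iterated_coeff} contains only the $i = 0$ term, namely $c(\ell^{r}n)$, and the second sum runs over $1 \leq j \leq \lfloor r/2 \rfloor$. Replacing $r$ by $p^r$ (note that $p^r \geq 2$, so $\lfloor p^r/2 \rfloor \geq 1$ and the summation range is nonempty) yields
\begin{equation*}
c_{p^r}(n) = c(\ell^{p^r}n) + \sum_{j = 1}^{\lfloor p^r/2 \rfloor}\left(\binom{p^r}{j} - \binom{p^r}{j - 1}\right)\ell^{j(k - 1)}c(\ell^{p^r - 2j}n).
\end{equation*}

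The key arithmetic input will be that $\binom{p^r}{j} \equiv 0 \pmod{p}$ for every $j$ with $1 \leq j \leq p^r - 1$. I would justify this by Lucas' theorem: the base-$p$ digits of $p^r$ are all zero except for a single $1$ in position $r$, whereas any such $j$ has a nonzero digit in some position $< r$, forcing the corresponding factor in the Lucas product to vanish. (Kummer's theorem gives the same conclusion, since adding $j$ to $p^r - j$ produces at least one carry in base $p$.)

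Granting this, I would reduce the displayed sum term by term. For $2 \leq j \leq \lfloor p^r/2 \rfloor$ both $\binom{p^r}{j}$ and $\binom{p^r}{j - 1}$ lie in the range between $1$ and $p^r - 1$ and hence vanish modulo $p$, so those terms contribute nothing. The only surviving term is $j = 1$, where $\binom{p^r}{1} \equiv 0$ but $\binom{p^r}{0} = 1$, giving the contribution $(0 - 1)\ell^{k - 1}c(\ell^{p^r - 2}n) = -\ell^{k - 1}c(\ell^{p^r - 2}n)$. Combining this with the leading term $c(\ell^{p^r}n)$ produces the claimed congruence. Since every step after invoking Lemma~\ref{iterated_coeff} is a direct reduction, there is no serious obstacle; the only point requiring care is confirming that the index $j = 1$ always lies in the summation range and must be treated separately from the $j \geq 2$ terms, which is exactly where the value $\binom{p^r}{0} = 1$ (rather than $0$) makes the difference.
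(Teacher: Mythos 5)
Your proposal is correct and follows essentially the same route as the paper: specialize Lemma~\ref{iterated_coeff} to $s=0$ with exponent $p^r$, then kill every term except $j=1$ using $\binom{p^r}{m}\equiv 0\pmod{p}$ for $1\leq m\leq p^r-1$ together with $\binom{p^r}{0}=1$. The only difference is that you supply a justification (Lucas/Kummer) for the binomial-coefficient vanishing, which the paper simply asserts.
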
 

\begin{proof}
Letting $s = 0$ and replacing $r$ by $p^r$ in Lemma \ref{iterated_coeff}, it follows for all $n\geq 0$ with $\ell\nmid n$ that 
\begin{align*}
c_{p^r}(n) & = c(\ell^{p^r}n) + \sum_{j = 1}^{\frac{p^r - 1}{2}}\left(\binom{p^r}{j} - \binom{p^r}{j - 1}\right)\ell^{j(k - 1)}c(\ell^{p^{r}-2j}n) \\
& \equiv c(\ell^{p^r}n) - \ell^{k - 1}c(\ell^{p^r - 2}n) \pmod{p},
\end{align*}
where the congruence follows since $\binom{p^r}{m}\equiv 0\Mod{p}$ for $1\leq m \leq p^r - 1$ and 
$\binom{p^r}{0} = 1$.
\end{proof}
\noindent
Lastly, for $m\geq 1$, we recall the $U_m$-operator on $f(z) = \sum c(n)q^n \in M_k(\Gamma_0(N))$.  We have 
\begin{equation}
f(z)\mid U_{m} = \sum c(mn)q^n \label{U_operator}
\end{equation}
and 
\begin{equation*}
U_m : M_k(\Gamma_0(N)) \longrightarrow \begin{cases}
M_k(\Gamma_0(N)), & m\mid N, \\
M_k(\Gamma_0(Nm)), & m\nmid N.
\end{cases}
\end{equation*}

%%%%%%%%%%%%%%%%%%
%% Section 3
%%%%%%%%%%%%%%%%%%

\section{Proofs of Theorem \ref{simple_bound}, Proposition \ref{vanishing}, and Theorems \ref{partition_5} and \ref{partition_3}}

\subsection{Proof of parts (1) and (2) of Theorem \ref{simple_bound}} 
Since the proofs of parts (1) and (2) of Theorem \ref{simple_bound} are similar for $p\in \{3, 5, 7\}$, it suffices to give details for the $p = 5$ case.  To begin, we require a lemma.  We recall from part (1) of Theorem \ref{nilpotent_theorem} that when $\ell \equiv \pm 1 \Mod{5}$, the Hecke operator $T_{\ell}^{'}$ acts locally nilpotently on $\widetilde{M}^{0, (5)}(\Gamma_{0}(1)) = \mathbb{F}_{5}[\widetilde{\Delta}]$.
\begin{lemma}
The space $\widetilde{M}^{0, (5)}(\Gamma_{0}(1))$ has a basis $B =  \{\widetilde{f}_{5i + j} : i \geq 0 \:, \: 0 \leq j \leq 4\}$, where
\begin{gather*}
\widetilde{f}_{5i + j} = q^{5i+j} + \ldots = 
\begin{cases}
    \displaystyle{\sum_{n \equiv 1,4 \Mod{5}} \widetilde{a_{i,j}(n)}q^{n}, \quad j \in \{1,4\}}, \\
    \displaystyle{\sum_{n \equiv 2,3 \Mod{5}} \widetilde{a_{i,j}(n)} q^{n}, \quad j \in \{2,3\}}. \\
\end{cases}
\end{gather*}
\end{lemma}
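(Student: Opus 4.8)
The plan is to use the theta operator $\theta = q\,d/dq$ from \eqref{theta_operator} to split $\mathbb{F}_5[\Delta] = \widetilde{M}^{0,(5)}(\Gamma_0(1))$ into pieces indexed by the residue of the $q$-exponent modulo $5$, and then to read off the asserted basis by a triangularity argument. First I would record the trivial graded basis: by \eqref{delta} we have $\Delta^m \equiv q^m + \cdots \pmod 5$, so the monomials $\{\Delta^m : m \ge 0\}$ form a basis of $\mathbb{F}_5[\Delta]$ with strictly increasing $q$-order, exactly one in each order $m \ge 0$. Consequently \emph{any} family $\{f_m : m\ge 0\}$ with $f_m \equiv q^m + \cdots \pmod 5$ is automatically a basis, and the only real content of the lemma is the support condition on the $f_m$.

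Next I would study $\theta^2$ on $\mathbb{F}_5[\Delta]$. By \eqref{theta_operator}, $\theta$ raises the weight by $p+1 = 6$, so $\theta^2$ raises it by $12 \equiv 0 \pmod{4}$; hence $\theta^2$ preserves the weight class modulo $p-1 = 4$ and maps $\widetilde{M}^{0,(5)}(\Gamma_0(1)) = \mathbb{F}_5[\Delta]$ into itself. (The single operator $\theta$ does \emph{not} preserve this space, which is why I pass to $\theta^2$.) On $q$-expansions $\theta$ scales the coefficient of $q^n$ by $n$, so $\theta^5$ scales it by $n^5 \equiv n \pmod 5$; thus $\theta^5 = \theta$, i.e.\ $(\theta^2)^3 = \theta^2$, on $\mathbb{F}_5[\Delta]$. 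Since $X^3 - X = X(X-1)(X+1)$ is squarefree over $\mathbb{F}_5$, the operator $\theta^2$ is diagonalizable with eigenvalues in $\{0,1,-1\}$, giving
\[
\mathbb{F}_5[\Delta] = V_0 \oplus V_1 \oplus V_{-1}, \qquad V_\lambda = \ker(\theta^2 - \lambda).
\]
Because $\theta^2$ scales the coefficient of $q^n$ by $n^2$, and $n^2 \bmod 5$ equals $0,1,4$ according as $n \equiv 0$, $n \equiv \pm 1$, or $n \equiv \pm 2 \pmod 5$, the eigenspace $V_\lambda$ consists precisely of the forms supported on those residues: $V_1$ on $n \equiv 1,4$; $V_{-1}$ on $n \equiv 2,3$; and $V_0$ on $n \equiv 0$.

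Finally I would use the spectral projections, which are polynomials in $\theta^2$ and hence again preserve $\mathbb{F}_5[\Delta]$:
\[
\pi_1 = 3(\theta^4 + \theta^2), \qquad \pi_{-1} = 3(\theta^4 - \theta^2), \qquad \pi_0 = 1 - \theta^4,
\]
where $3 \equiv 2^{-1} \pmod 5$; checking on eigenvalues shows $\pi_\lambda$ projects onto $V_\lambda$ and that $\pi_1 + \pi_{-1} + \pi_0 = \mathrm{id}$. For $m = 5i+j$ I set $f_m = \pi_1\Delta^m$ if $j\in\{1,4\}$, $f_m = \pi_{-1}\Delta^m$ if $j\in\{2,3\}$, and $f_m = \pi_0\Delta^m$ if $j=0$. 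Since the projection matched to the residue of $m$ scales the coefficient of $q^m$ by $1$ (as $m^2$ lies in the correct class) and $\Delta^m$ has no terms below $q^m$, each $f_m \equiv q^m + \cdots \pmod 5$; by the triangularity observation the $f_m$ form a basis, and by construction $f_m \in V_\lambda$ has exactly the claimed support.

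I do not expect a serious obstacle here; the only points requiring care are the weight bookkeeping that forces the use of $\theta^2$ rather than $\theta$ (so that $\mathbb{F}_5[\Delta]$ is preserved) and the verification that each projection fixes the leading coefficient of $q^m$, which is what makes the triangularity argument actually return a basis. The $j=0$ strand, omitted from the displayed statement, is handled identically via $\pi_0$ and yields forms supported on $n \equiv 0 \pmod 5$.
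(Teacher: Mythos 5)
Your use of $\theta^2$ is essentially the paper's own mechanism in disguise: the paper exhibits explicit $\theta^2$-eigenvectors $f_0,\dots,f_4$ in low degree and translates them by $\Delta^{5i}$ (using $\theta(\Delta^{5i})\equiv 0$), which is exactly what your spectral projections produce, since $\pi_\lambda(\Delta^{5i}g)=\Delta^{5i}\pi_\lambda(g)$. Your eigenvalue bookkeeping is correct throughout: $\theta^2$ preserves $\mathbb{F}_5[\Delta]$, satisfies $(\theta^2)^3=\theta^2$, its eigenspaces are cut out by the support conditions $n^2\equiv\lambda\pmod 5$, the projections $\pi_\lambda$ are as you wrote them, and each fixes the leading coefficient of the matched $\Delta^m$. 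So the support half of the lemma is fully proved.

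The gap is in the sentence ``consequently \emph{any} family $\{f_m\}$ with $f_m\equiv q^m+\cdots$ is automatically a basis.'' That claim is false: such a family is linearly independent (look at the lowest-order term of a nontrivial relation), but it need not span $\mathbb{F}_5[\Delta]$, where only \emph{finite} linear combinations are allowed. For instance, $g_m=\Delta^m+\Delta^{m+1}$ satisfies $g_m=q^m+\cdots$ for every $m$, yet $1\notin\mathrm{Span}_{\mathbb{F}_5}\{g_m\}$: in any finite relation $1=\sum_{m\le M}c_mg_m$ with $c_M\ne 0$, the coefficient of $\Delta^{M+1}$ on the right is $c_M\ne 0$. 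Triangularity in the $q$-expansion only inverts the change-of-basis matrix formally (i.e.\ topologically in $\mathbb{F}_5[[q]]$), not over the polynomial ring. Since your $f_m=\pi_{\lambda(m)}\Delta^m$ has degree up to $m+2$ in $\Delta$, the same obstruction could in principle occur, and spanning genuinely needs an argument. The later proof of \eqref{degree_low_5} uses precisely the finite-spanning statement (that $f_{5i+1}\mid T_\ell'$ is a finite combination of $f_k$ with $k\le 5i+1$), so this cannot be waved away. The fix is what the paper does via \eqref{del_B}: verify directly that each $\Delta^j$ for $0\le j\le 4$ is a finite $\mathbb{F}_5$-combination of $f_0,\dots,f_5$ (equivalently, that the complementary projections $\pi_\mu\Delta^j$, $\mu\ne\lambda(j)$, land in $\mathrm{Span}\{f_0,\dots,f_5\}$), and then translate by $\Delta^{5i}$; this is a finite computation in degree at most $5$ and closes the gap.
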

\begin{proof}
We define
\begin{align*}
\widetilde{f}_{0} &= 1, \\
\widetilde{f}_{1} &= \widetilde{\Delta} + 4 \widetilde{\Delta}^{2} =  q + \ldots, \\
\widetilde{f}_{2} &= \widetilde{\Delta}^{2} = q^{2} + \ldots,\\
\widetilde{f}_{3} &= \widetilde{\Delta}^{3} + 2 \widetilde{\Delta}^{4} + 3 \widetilde{\Delta}^{5} = q^{3} + \ldots, \\
\widetilde{f}_{4} &= \widetilde{\Delta}^{4} + \widetilde{\Delta}^{5} = q^{4} + \ldots. \\
\end{align*}
For all $i \geq 0$ and $0 \leq j \leq 4$, \text{we let} $\widetilde{f}_{5i + j} = \widetilde{\Delta}^{5i} \widetilde{f}_{j} =  q^{5i + j} + \ldots$. With $\theta = q\frac{d}{dq}$ as in \eqref{theta_operator}, we claim that 
\[
\theta^{2}(\widetilde{f}_{5i + j}) = \left( \frac{j}{5} \right) \widetilde{f}_{5i + j},
\]
where $\left(\frac{\bigcdot}{5}\right)$ is the Legendre symbol.  We compute
$\theta^{2}(\widetilde{f}_{j}) = \widetilde{f}_{j} \:\:\text{for}\:\: j \in \{1,4\} \:\:\text{and}\:\: \theta^{2}(\widetilde{f}_{j}) = - \widetilde{f}_{j} \:\:\text{for}\:\: j \in \{2,3\}.$ 
Furthermore, for all $i \geq 0$ and $1 \leq j \leq 4$, we have
\begin{align*}
 \theta^{2}(\widetilde{f}_{5i+j}) & = \theta(\widetilde{\Delta}^{5i} \theta(\widetilde{f}_{j})) = \widetilde{\Delta}^{5i} \: \theta^{2}(\widetilde{f}_{j}) + \theta(\widetilde{f}_{j}) \: \theta(\widetilde{\Delta}^{5i}) \\ 
 &= \widetilde{\Delta}^{5i} \theta^{2}(\widetilde{f}_{j}) =
\begin{cases}
\widetilde{f}_{5i + j}, \quad j \in \{1,4\}, \\
- \widetilde{f}_{5i + j}, \quad j \in \{2,3\}.
\end{cases}
\end{align*}
To conclude, we observe that $\theta^{2}(\widetilde{f}_{5i+j}) = \widetilde{f}_{5i+j}$ for $j \in \{1,4\}$ implies that $\widetilde{f}_{5i + j}$ is supported on squares modulo $5$, and for $j \in \{2,3\}$ that $\theta^{2}(\widetilde{f}_{5i+j}) = - \widetilde{f}_{5i+j}$ implies that $ \widetilde{f}_{5i + j}$ is supported on non-squares modulo 5.
\end{proof}
\noindent
We also require explicit representations of powers of $\widetilde{\Delta}$ in $\F_{5}[[q]]$ in the basis $B$:
\begin{gather}\label{del_B}
   \widetilde{\Delta}^{5i + j} \equiv 
   \begin{cases}
   \widetilde{f}_{5i} &\text{if $j = 0$},\quad \\
       \widetilde{f}_{5i + 1} + \widetilde{f}_{5i + 2} &\text{if $j = 1$},\quad \\
			\widetilde{f}_{5i + 2} \quad &\text{if $j = 2$}, \quad \\
    \widetilde{f}_{5i + 3} + 3 \widetilde{f}_{5i + 4} + 4 \widetilde{f}_{5(i+1)} &\text{if $j = 3$},\quad \\
			\widetilde{f}_{5i + 4} + 4 \widetilde{f}_{5(i+1)} \quad &\text{if $j = 4$}. \quad
   \end{cases}
\end{gather} 
We denote the subspace of $\widetilde{M}^{0, (5)}(\Gamma_{0}(1))$ of cusp forms by $\widetilde{S}^{0, (5)}(\Gamma_{0}(1))$, and we note that $\widetilde{S}^{0, (5)}(\Gamma_{0}(1)) = \widetilde{\Delta} \mathbb{F}_{5}[\widetilde{\Delta}]$. We consider the following subspaces of $\widetilde{S}^{0} = \widetilde{S}^{0, (5)}(\Gamma_{0}(1))$:
\begin{align*}
 \widetilde{S}^{0, 1} &= \{\widetilde{f} \in \widetilde{S}^{0} \: | \: \theta^{2}(\widetilde{f}) = \widetilde{f} \} = \text{Span}_{\mathbb{\F}_{5}}\{\widetilde{f}_{5i + j} \: | \: j \in \{1,4\} \},  \\
 \widetilde{S}^{0, - 1} &= \{f \in \widetilde{S}^{0} \: | \: \theta^{2}(\widetilde{f}) = - \widetilde{f} \} = \text{Span}_{\mathbb{F}_{5}}\{\widetilde{f}_{5i + j} \: | \: j \in \{2,3\} \},  \\ 
 \widetilde{S}^{0, 0} &= \{\widetilde{f} \in \widetilde{S}^{0} \: | \: \theta^{2}(\widetilde{f}) = 0 \} = \text{Span}_{\mathbb{F}_{5}}\{\widetilde{f}_{5i} \: | \: i \geq 1 \}.
\end{align*}
Let $\ell \neq 5$ be prime. It follows from \eqref{Hecke_defn} that
\begin{align}
T_{\ell}^{'} &: \widetilde{S}^{0, j} \to \widetilde{S}^{0, j} \ \text{for} \ j \in \{-1,0,1\} \:\: \text{and} \:\: \ell \equiv \pm 1 \Mod{5}; \label{pm_1_map} \\
T_{\ell}^{'} &: \widetilde{S}^{0, 1} \to \widetilde{S}^{0, -1}, \:\: \widetilde{S}^{0, -1} \to \widetilde{S}^{0, 1}, \:\: \widetilde{S}^{0, 0} \to \widetilde{S}^{0, 0} \ \text{for} \ \ell \equiv \pm 2 \Mod{5}. \notag
\end{align}
We also observe that $\widetilde{S}^{0} = \widetilde{S}^{0,1} \bigoplus \widetilde{S}^{0,-1} \bigoplus \widetilde{S}^{0,0}$.
Let $\ell \equiv \pm{1} \Mod{5}$ be prime, let $i \geq 0$, and let $0 \leq j \leq 4$.
We prove part (2) of Theorem \ref{simple_bound} for $p = 5$ in the following form:
\begin{gather}\label{bound_5}
N_{\ell}(\widetilde{\Delta}^{5 i + j}, 5)  \leq 
\begin{cases}
    2 i + 1, \quad j \in \{1,2\}, \\
    2 i + 2, \quad j \in \{3,4\}, \\
    i, \quad \quad \quad j = 0.
\end{cases}
\end{gather}
\noindent
We first claim that in order to prove \eqref{bound_5}, it suffices to show that    
\begin{gather}\label{index_5}
N_{\ell}(\widetilde{f}_{5 i + j}, 5)  \leq 
\begin{cases}
    2 i + 1, \quad j \in \{1,2\}, \\
    2 i + 2, \quad j \in \{3,4\}, \\
    i, \quad \quad \quad j = 0.
\end{cases}
\end{gather}
To see this, we let $\ell \equiv \pm 1 \Mod{5}$ and $j = 3$, and we assume that \eqref{index_5} holds.  For all $i \geq 0$, we use \eqref{del_B} to obtain $\widetilde{\Delta}^{5i + 3} \mid T_{\ell}^{'} = (\widetilde{f}_{5i + 3} + 3 \: \widetilde{f}_{5i + 4} + 4 \: \widetilde{f}_{5(i+1)}) \mid T_{\ell}^{'}$ in $\F_{5}[[q]]$. Using \eqref{index_5}, it follows that
$N_{\ell}(\widetilde{\Delta}^{5i+3}, 5) \leq \max\{N_{\ell}(\widetilde{f}_{5 i + 3}, 5), N_{\ell}(\widetilde{f}_{5 i + 4}, 5), N_{\ell}(\widetilde{f}_{5 (i + 1)}, 5) \} \leq 2 i + 2$. 
The arguments for $j \neq 3$ are similar. 

\medskip

We now prove \eqref{index_5}.
We will show for all $i \geq 1$, that 
\begin{gather}\label{degree_low_5}
N_{\ell}(\widetilde{f}_{5i + j}, 5) \leq 1 + 
\begin{cases}
 \max \{ N_{\ell}(\widetilde{f}_{k}, 5) : 1 \leq k \leq 5(i - 1), \: 5 \mid k\}, \quad \quad \: \: j = 0, \\
\max \{ N_{\ell}(\widetilde{f}_{k}, 5) : 1 \leq k \leq 5i - 1, \: \left( \frac{k}{5} \right) = 1 \}, \: \: \: \quad j = 1, \\
\max \{ N_{\ell}(\widetilde{f}_{k}, 5) : 1 \leq k \leq 5i - 2, \: \left( \frac{k}{5} \right) = -1 \}, \quad j = 2, \\
\max \{ N_{\ell}(\widetilde{f}_{k}, 5) : 1 \leq k \leq 5i + 2, \: \left( \frac{k}{5} \right) = -1 \}, \quad j = 3, \\
\max \{ N_{\ell}(\widetilde{f}_{k}, 5) : 1 \leq k \leq 5i + 1, \: \left( \frac{k}{5} \right) = 1\}, \:\: \: \quad j = 4. \\
\end{cases}
\end{gather}
To illustrate, we prove \eqref{degree_low_5} for $j = 1$. 
We observe that
\[
\widetilde{f}_{5i + 1} \in \widetilde{S}_{12(5i + 2)}^{0, (5)} \: \medcap\: \widetilde{S}^{0,1} = \text{Span}_{\mathbb{F}_{5}}\{\widetilde{f}_{5a + b} \: | \: a \geq 0 \:, \: b \in \{1,4\} \:, \:5a + b \leq 5i + 1 \}.
\]
When $\ell \equiv \pm{1} \Mod{5}$, we see from \eqref{pm_1_map} that $T_{\ell}^{'}$ maps this space to itself.  It follows that
\begin{equation*}
\widetilde{f}_{5i + 1} \mid T_{\ell}^{'} = \sum_{r = 0}^{i} \widetilde{c}_{r} \widetilde{f}_{5r + 1} + \sum_{s = 0}^{i - 1} \widetilde{d}_{s} \widetilde{f}_{5s + 4}.
\end{equation*}
We claim that $\widetilde{c_{i}} = 0$ in $\F_{5}$. We suppose that $\widetilde{c}_{i} \neq 0 $.  We compute
\begin{align*}
\textup{deg}_{\widetilde{\Delta}}(\widetilde{f}_{5i + 1}) = 5i + 2 > 5i &=  \underset{0 \leq r, s \leq i - 1}{\max} \{ \textup{deg}_{\widetilde{\Delta}}(\widetilde{f}_{5r + 1}) \: , \: \textup{deg}_{\widetilde{\Delta}}(\widetilde{f}_{5s + 4})\}\\ 
&= \max_{0 \leq r, s \leq i - 1} \{ 5r + 2 \: , \: 5(s + 1)\} 
= \textup{deg}_{\widetilde{\Delta}} (\widetilde{f}_{5i - 1}).
\end{align*}
Therefore, $\widetilde{c}_{i} \neq 0$ implies that $\textup{deg}_{\widetilde{\Delta}}(\widetilde{f}_{5i + 1}\mid T_{\ell}^{'}) = 5i + 2 = \textup{deg}_{\widetilde{\Delta}} (\widetilde{f}_{5i + 1})$, which contradicts nilpotency of $T_{\ell}^{\prime}$.  We conclude that
\begin{equation}
\widetilde{f}_{5i + 1} \mid T_{\ell}^{'} = \sum_{r = 0}^{i - 1} \widetilde{c}_{r} \widetilde{f}_{5r + 1} + \sum_{s = 0}^{i - 1} \widetilde{d}_{s} \widetilde{f}_{5s + 4}\label{sum}
\end{equation}
which gives \eqref{degree_low_5} for $j = 1.$ The proof of the remaining cases follow similarly.

\medskip

\noindent
We now use induction to prove \eqref{index_5}, and with it, the $p = 5$ case of part (2) of Theorem \ref{simple_bound}.
\vspace{1mm}
\noindent
The base cases are 
\[
N_{\ell}(\widetilde{f}_{1}, 5) = N_{\ell}(\widetilde{f}_{2}, 5) = 1, \ N_{\ell}(\widetilde{f}_{3}, 5)  \leq 1 + N_{\ell}(\widetilde{f}_{2}, 5) = 2, \ N_{\ell}(\widetilde{f}_{4}, 5) \leq 1 + N_{\ell}(\widetilde{f}_{2}, 5) = 2.
\]
We let $i \geq 1$ and $0 \leq b \leq 4$, and we suppose for all $0 \leq a \leq i - 1$ that the statement holds for $5a + b$. To illustrate the general argument, we prove the statement for $5i +j$ when $j = 1$.  Using the induction hypothesis \eqref{index_5} and \eqref{sum}, we find that
\begin{align*}
N_{\ell}(\widetilde{f}_{5i + 1}, 5) &\leq 1 + \max\{N_{\ell}(\widetilde{f}_{k}, 5) : 1 \leq k \leq 5(i - 1) + 4, \, k\equiv 1, 4 \Mod{5}\} \\
 &\leq 1 + \max \{N_{\ell}(\widetilde{f}_{1}, 5), \: N_{\ell}(\widetilde{f}_{4}, 5), \: \ldots, \: N_{\ell}(\widetilde{f}_{5(i - 1) + 1}, 5), \: N_{\ell}(\widetilde{f}_{5(i - 1) + 4}, 5) \} \\
 &\leq 1 + \max \{1, \: 2, \: \ldots, 2(i - 1) + 1, \: 2(i - 1) + 2\} = 2i + 1,
 \end{align*}
 which proves \eqref{index_5} for $j = 1$.  

%%%%%%%%
%% Section 3.2
%%%%%%%%

\vspace{2mm}
\subsection{Proof of Part (3) of Theorem \ref{simple_bound}}
We shift our attention to $\widetilde{M}^{(3)}(\Gamma_{0}(4))$. We let 
\begin{align}
& P(z) = E_{2,2}(z) = 2 E_{2}(2z) - E_{2}(z) \in M_{2}(\Gamma_{0}(2)), \ 
A(z) = \frac{\eta(2z)^{16}}{\eta(z)^{8}} \in M_{4}(\Gamma_{0}(2)), \notag \\
& F(z) = \frac{\eta(4z)^{8}}{\eta(2z)^{4}} \in M_{2}(\Gamma_{0}(4)), 
\ 
G(z) = \Delta(2z) \in S_{12}(\Gamma_{0}(2)), \label{defn_G}
\end{align}
where $E_{2,2}(z)$ is an Eisenstein series as in \eqref{Eisenstein}. 
\noindent
We recall from Section 1 that $D_2(z) = \eta(2z)^{12} \in S_{6}(\Gamma_{0}(4))$, and we note that 
\begin{align}
& D_2(z)^2  = G(z), \label{D2G} \\
& \widetilde{D}_{2}(z) = \widetilde{F}(z) - \widetilde{F}^{3}(z) \: \text{in} \: \F_{3}[[q]], \label{D2_equivalence} \\
& \widetilde{P}(z) = 1 \: \text{in} \: \F_{3}[[q]]. \label{P_cong}
\end{align}
\noindent
We use the next theorem to prove bounds on $N_{\ell}(\widetilde{D}_{2}^{k}, 3)$ for primes $\ell \not\in \{2, 3\}$ and $k \geq 1$. Its proof is an adaptation of Monsky's work \cite{monsky} in $\widetilde{M}^{(2)}(\Gamma_{0}(3))$.
\begin{theorem} \label{N4_l3}
Consider the following subspaces of $\widetilde{M}^{(3)}(\Gamma_{0}(4))$:
\begin{align*}
W_{1} = {\rm Span}_{\mathbb{F}_3}\{ \widetilde{D}_{2}^{k}, \: \widetilde{D}_{2}^{i} \widetilde{F}^{3} : k \equiv 1 \Mod{6} \: \text{and} \: i \equiv 4 \Mod{6} \},  \\
W_{5} = {\rm Span}_{\mathbb{F}_3}\{ \widetilde{D}_{2}^{k}, \: \widetilde{D}_{2}^{i}\widetilde{F}^{3} : k \equiv 5 \Mod{6} \: \text{and} \: i \equiv 2 \Mod{6} \}.
\end{align*}
We have
\begin{gather*}
\displaystyle{T_{\ell}^{'} : \,
\begin{dcases}
			W_{1} \to W_{1}, \: W_{5} \to W_{5} &\text{if $\ell \equiv 1 \Mod{6}$},\quad \\
			 W_{1} \to W_{5}, \: W_{5} \to W_{1} &\text{if $\ell \equiv -1 \Mod{6}.$} \quad
\end{dcases} 
  }
\end{gather*}
\end{theorem}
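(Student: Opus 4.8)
The plan is to reduce the statement to a short computation of how $T_\ell$ shifts the residue class modulo $3$ of the support of a $q$-expansion, and then to identify $W_1$ and $W_5$ with eigenspaces of the operator $\theta = q\frac{d}{dq}$ acting modulo $3$. First I would record the supports of the generators. Writing $D_2 = (\eta(2z)^3)^4 \equiv (\eta(2z)^3)^3\,\eta(2z)^3 \pmod 3$ and using the Frobenius congruence $g(z)^3 \equiv g(3z) \pmod 3$ (valid coefficientwise, even for the quarter-integral exponents of $\eta(2z)^3$), one gets $D_2 \equiv \eta(6z)^3\,\eta(2z)^3 \pmod 3$. Expanding both factors via \eqref{eta3_expansion} shows every exponent has the form $(m^2+3n^2)/4$ with $m,n$ odd, and whenever its coefficient is nonzero modulo $3$ one has $3\nmid mn$, forcing the exponent to be $\equiv 1 \pmod 3$. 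Hence $D_2$ is supported on exponents $\equiv 1 \pmod 3$, while $F^3 \equiv F(3z) \pmod 3$ is supported on exponents $\equiv 0 \pmod 3$. Therefore $D_2^k$ is supported on $n \equiv k$ and $D_2^i F^3$ on $n \equiv i \pmod 3$; reading off the congruences in the definitions, every element of $W_1$ is supported on $n\equiv 1$ and every element of $W_5$ on $n\equiv 2 \pmod 3$. Equivalently, since $\theta$ multiplies the coefficient of $q^n$ by $n \equiv \left(\frac{n}{3}\right) \pmod 3$, the space $W_1$ lies in the $(+1)$-eigenspace and $W_5$ in the $(-1)$-eigenspace of $\theta$ modulo $3$.

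Next I would establish the Hecke shift. For $f = \sum c(n)q^n$ supported on $n \equiv a \pmod 3$ and $\ell \neq 3$, formula \eqref{Hecke_defn} shows that the $q^n$-coefficient of $f\mid T_\ell$ involves only $c(\ell n)$ and $c(n/\ell)$, each of which is nonzero only when $n \equiv a\ell^{-1} \equiv a\ell \pmod 3$ (using $\ell^2\equiv 1$); hence $f\mid T_\ell$ is supported on $n \equiv a\ell \pmod 3$. One can package this as the operator congruence $(\theta f)\mid T_\ell \equiv \ell\,\theta(f\mid T_\ell) \pmod 3$, which carries a $\theta$-eigenvector of eigenvalue $a$ to one of eigenvalue $a\ell$. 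For $\ell \equiv 1 \pmod 6$ this fixes $a$, and for $\ell \equiv -1 \pmod 6$ it interchanges $a=1$ and $a=2$. Finally, by \eqref{mod_Hecke} the operator $T_\ell'$ differs from $T_\ell$ only by the scalar $-2$ when $\ell\equiv 1\pmod 3$, and scalars preserve support, so $T_\ell'$ obeys the same shift. This reproduces exactly the $W_1 \leftrightarrow W_5$ pattern in the statement.

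The main obstacle is that having the correct support modulo $3$ does not by itself place $f\mid T_\ell'$ back inside the prescribed spans: I must show that $W_1 \oplus W_5$ is genuinely $T_\ell'$-stable, i.e.\ that the image is again an $\mathbb{F}_3$-combination of the monomials $D_2^k$ and $D_2^i F^3$ with the stated congruences. Following Monsky's treatment of $\widetilde{M}^{(2)}(\Gamma_0(3))$, I would handle this by producing, inside the $T_\ell'$-stable space $\widetilde{M}^{(3)}(\Gamma_0(4))$ of part (3) of Theorem~\ref{nilpotent_theorem}, an explicit basis each of whose elements has pure support modulo $3$, and then identifying $W_1$ and $W_5$ with the full $\theta$-eigenspaces of eigenvalues $+1$ and $-1$. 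The relations \eqref{D2G}, \eqref{D2_equivalence}, and \eqref{P_cong} are what make this tractable: $D_2^2 = G$ reduces even powers to powers of $\Delta(2z)$, the congruence $D_2 \equiv F - F^3$ lets one trade $D_2$ for $F$ and so control the mixed-support contributions, and $P\equiv 1$ removes the weight-$2$ Eisenstein factor so that the weight grading is compatible with the support decomposition (all occurring weights are even, so $\ell^{k-1}\equiv \ell \pmod 3$ is constant and $T_\ell$ is well defined modulo $3$). Verifying that these monomials span a $T_\ell'$-invariant module and are linearly independent, so that the support-$1$ and support-$2$ parts are precisely $W_1$ and $W_5$, is the technical heart; once it is in place, the support-shift computation above closes the argument.
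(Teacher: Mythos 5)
Your support computations are sound: $D_2 \equiv \eta(6z)^3\,\eta(2z)^3 \pmod{3}$ is indeed supported on exponents $\equiv 1 \pmod{6}$, $F^3$ on exponents $\equiv 0 \pmod{3}$, and \eqref{Hecke_defn} does carry a form supported on $n \equiv a \pmod{3}$ to one supported on $n \equiv a\ell \pmod{3}$; you also correctly isolate the real difficulty, namely that support alone does not put $f\mid T_{\ell}'$ back into the prescribed spans. However, the device you propose for closing that gap rests on a false identification. The spaces $W_1$ and $W_5$ are \emph{not} the full $\theta$-eigenspaces of eigenvalue $\pm 1$ on $\widetilde{M}^{(3)}(\Gamma_0(4)) = \mathbb{F}_3[F]$. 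Indeed, $G = D_2^{\,2} = \Delta(2z) \equiv (F - F^3)^2 \pmod{3}$ is supported on $n \equiv 2 \pmod{6}$, hence lies in the $(-1)$-eigenspace, yet every generator of $W_5$ (namely $D_2^{k}$ with $k \equiv 5 \pmod 6$ and $D_2^{i}F^3$ with $i \equiv 2 \pmod 6$) vanishes to order at least $5$ at $q = 0$ while $G = q^2 + \cdots$, so $G \notin W_5$; likewise $G^2 = q^4 + \cdots$ lies in the $(+1)$-eigenspace but not in $W_1$, whose elements are supported on $n \equiv 1 \pmod 6$ and therefore have no $q^4$ term. The even powers of $D_2$ are pure-support forms invisible to $W_1 \oplus W_5$, so the mod-$3$ eigenspace decomposition cannot single out these spaces. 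Even the natural repair---grading by support modulo $6$ rather than modulo $3$, which is legitimate since $T_\ell$ for odd $\ell$ preserves the parity of the support---still leaves you needing to prove that $W_1$ is \emph{exactly} the set of elements of $\mathbb{F}_3[F]$ supported on $n \equiv 1 \pmod 6$ (and similarly for $W_5$), a nontrivial structural statement your outline defers but never supplies.

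The paper avoids this issue entirely by never characterizing $W_1, W_5$ through their support inside $\mathbb{F}_3[F]$. It descends to level $2$, proves that $\widetilde{K}^{(3)}(\Gamma_0(2)) = \mathrm{Ker}(U_3)$ is a free $\mathbb{F}_3[G^3]$-module with explicit basis $\{\Delta, \Delta^2, \Delta G^2, \Delta^2 G, G, G^2\}$, and then realizes $W_1 = \rho_{6,1}\bigl(\widetilde{K}^{(3)}(\Gamma_0(2))\bigr)$ and $W_5 = \rho_{6,5}\bigl(\widetilde{K}^{(3)}(\Gamma_0(2))\bigr)$ via the identities \eqref{rho_1_1}--\eqref{rho_5_2}. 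Stability under $T_\ell'$ then follows from the commutation rule \eqref{commutativity}, the $T_\ell$-stability of $\mathrm{Ker}(U_3)$, and the evaluation of $\rho_{6,1}$ and $\rho_{6,5}$ on the six module generators. It is this surjection from a level-$2$ module (or, equivalently, a proof of the mod-$6$ support characterization of $W_1$ and $W_5$) that is missing from your argument; without it the proof does not close.
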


Our proof of Theorem \ref{N4_l3} requires us to study forms on $\Gamma_{0}(2)$.  We recall, for all even $k \geq 0$, that $\textup{dim}(M_{k}(\Gamma_{0}(2))) = 1 + \Big\lfloor \frac{k}{4} \Big\rfloor$. Thus, for all $j \geq 1$, we have $\textup{dim}(\widetilde{M}_{8j}^{(3)}(\Gamma_{0}(2))) \leq 2j + 1$.
Since 
\begin{equation}
    B_j = \{\widetilde{A}^{i} \: | \: 0 \leq i \leq 2j\} \label{B_j}
\end{equation} 
is linearly independent, it is a basis for $\widetilde{M}_{8j}^{(3)}(\Gamma_{0}(2))$.  For all even $t \geq 0$, we define
\begin{align*}
\widetilde{K}_{t}^{(3)}(\Gamma_{0}(2)) &= \textup{Ker}(U_3) \:\: \text{on} \:\: \widetilde{M}_{t}^{(3)}(\Gamma_{0}(2)),  \\
\widetilde{K}^{(3)}(\Gamma_{0}(2)) &= \bigcup_{t \geq 0,\: t \: even} \widetilde{K}_{t}^{(3)}(\Gamma_{0}(2)) \subseteq \widetilde{M}^{(3)}(\Gamma_{0}(2)),
\end{align*}
where $U_3$ is the $U$-operator as in \eqref{U_operator}.
We investigate properties of $\widetilde{K}^{(3)}(\Gamma_{0}(2))$ in the following lemma.
\begin{lemma} \label{basis_lemma}
We assume the notation above.  For all $i\geq 0$, let 
\begin{equation}
    \widetilde{g}_i(z) = \widetilde{A}^{i}(\widetilde{A} - \widetilde{A}^2). \label{g_i}
\end{equation}
 A basis for $\widetilde{K}^{(3)}(\Gamma_{0}(2))$ is 
 \[
 B^{'} = \{\widetilde{g}_i : i \geq 0 \:\: \text{and} \:\: i \not\equiv 2 \Mod{3} \}.
 \]
\end{lemma}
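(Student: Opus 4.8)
The plan is to exploit the fact that, modulo $3$, the full graded algebra collapses to a polynomial ring in $A$, and then to compute $U_3$ on its monomial basis explicitly. First I would record that $\widetilde{M}^{(3)}(\Gamma_0(2)) = \mathbb{F}_3[A]$: since $P \equiv 1 \pmod 3$ by \eqref{P_cong} has weight $2$, multiplication by $P$ embeds $\widetilde{M}_t^{(3)}(\Gamma_0(2))$ into $\widetilde{M}_{t+2}^{(3)}(\Gamma_0(2))$, so every form reduces into some $\widetilde{M}_{8j}^{(3)}(\Gamma_0(2))$, and by \eqref{B_j} the latter has basis $\{A^i : 0 \le i \le 2j\}$; taking the union over $j$ yields $\mathbb{F}_3[A]$, graded so that $A^i$ has weight $4i$. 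I would then observe that on $\widetilde{M}_t^{(3)}(\Gamma_0(2))$ the operator $U_3$ coincides with the Hecke operator $T_3$ of \eqref{Hecke_defn}, because $3^{t-1} \equiv 0 \pmod 3$; since $T_3$ preserves $M_t(\Gamma_0(2))$, the operator $U_3$ preserves $\mathbb{F}_3[A]$, and the kernel we seek is simply $\ker(U_3)$ on $\mathbb{F}_3[A]$.

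The heart of the argument is a closed formula for $U_3$ on powers of $A$. Writing $V_3 : \sum c(n) q^n \mapsto \sum c(n) q^{3n}$, the congruence $h^3 \equiv h \mid V_3 \pmod 3$ together with the standard identity $U_3\big(g \cdot (h\mid V_3)\big) = (U_3 g)\cdot h$ yields, for $r \in \{0,1,2\}$,
\[
U_3(A^{3k+r}) = U_3\big(A^r \cdot (A^k)^3\big) \equiv A^k\, U_3(A^r) \pmod 3.
\]
Thus $U_3$ on $\mathbb{F}_3[A]$ is determined by the three base values $U_3(1)$, $U_3(A)$, $U_3(A^2)$. A direct computation with $q$-expansions modulo $3$ gives $U_3(1) = 1$ and, using that $\widetilde{M}_4^{(3)}(\Gamma_0(2)) = \mathrm{Span}\{1, A\}$ and $\widetilde{M}_8^{(3)}(\Gamma_0(2)) = \mathrm{Span}\{1, A, A^2\}$, that $U_3(A) \equiv A$ and $U_3(A^2) \equiv A \pmod 3$. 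Feeding these into the recursion, I obtain the uniform formula
\[
U_3(A^n) \equiv A^{\lceil n/3 \rceil} \pmod 3 \qquad (n \ge 0).
\]

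With this formula in hand the kernel falls out by linear algebra. For $f = \sum_{n\ge 0} \lambda_n A^n$ we have $U_3 f = \sum_{n\ge0} \lambda_n A^{\lceil n/3\rceil}$, so the coefficient of $A^0$ is $\lambda_0$ and, for each $m \ge 1$, the coefficient of $A^m$ is $\lambda_{3m-2} + \lambda_{3m-1} + \lambda_{3m}$. Hence $f \in \ker(U_3)$ exactly when $\lambda_0 = 0$ and every block sum vanishes. Each block $\{3m-2, 3m-1, 3m\}$ then contributes a two-dimensional space of relations spanned by the consecutive differences $A^{3m-2} - A^{3m-1}$ and $A^{3m-1} - A^{3m}$, so $\ker(U_3)$ has basis $\{A^n - A^{n+1} : n \ge 1,\ 3 \nmid n\}$. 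Writing $n = i+1$ turns this into $\{A^i(A - A^2) : i \ge 0,\ i \not\equiv 2 \pmod 3\} = B^{'}$; linear independence is immediate, since the $g_i$ have distinct top-degree terms $-A^{i+2}$ in $\mathbb{F}_3[A]$.

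The main obstacle is the determination of the base cases, specifically the equalities $U_3(A) \equiv A$ and $U_3(A^2) \equiv A \pmod 3$: the entire shape of the answer — and in particular that the residues $r = 1$ and $r = 2$ produce the \emph{same} value $A$, which is exactly what forces the kernel to be spanned by consecutive differences — hinges on these two identities. Everything else is the formal recursion and a routine kernel computation. I would pin the base cases down by computing the $q$-expansion of $A = \eta(2z)^{16}/\eta(z)^8$ modulo $3$ to $O(q^{10})$ and extracting the coefficients at multiples of $3$; alternatively, $U_3(A) \equiv A$ follows from $T_3$ acting as the identity on the Eisenstein space $\widetilde{M}_4^{(3)}(\Gamma_0(2))$, where the relevant eigenvalue is $1 + 3^3 \equiv 1 \pmod 3$.
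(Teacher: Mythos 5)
Your proposal is correct, and it reaches the lemma by a genuinely different route than the paper. The paper's proof works one weight at a time: it notes that the $g_i$ in $B_j'$ are linearly independent from their distinct leading terms $q^{i+1}$, reduces an arbitrary $f\in\widetilde{K}_{8j}^{(3)}(\Gamma_0(2))$ against them until only a possible top term $cA^{2j}$ remains (using the monomial basis $B_j$ of $\widetilde{M}_{8j}^{(3)}(\Gamma_0(2))$), and then kills $c$ by observing that the coefficient of $q^{3t+3}$ in $A^{3t}, A^{3t+1}, A^{3t+2}$ is nonzero mod $3$, so $A^{2j}\notin\ker(U_3)$. Notably, the paper simply asserts that $B_j'\subseteq\widetilde{K}_{8j}^{(3)}(\Gamma_0(2))$, i.e.\ that each $g_i$ with $i\not\equiv 2\pmod 3$ is annihilated by $U_3$; your closed formula $U_3(A^n)\equiv A^{\lceil n/3\rceil}\pmod 3$ proves this membership and the spanning statement in one stroke, since the kernel computation reduces to the block sums $\lambda_{3m-2}+\lambda_{3m-1}+\lambda_{3m}$. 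What you pay for this is the verification of the base cases $U_3(A)\equiv U_3(A^2)\equiv A\pmod 3$, but this is a finite $q$-expansion check (or the Eisenstein eigenvalue observation $1+3^3\equiv 1$), and it is of essentially the same computational nature as the coefficient claim the paper leaves unverified. One small point worth making explicit in your write-up: in identifying $\ker(U_3)$ on $\widetilde{M}^{(3)}(\Gamma_0(2))$ with $\ker(U_3)$ on $\mathbb{F}_3[A]$ you are using that multiplication by $P\equiv 1$ commutes with $U_3$ on $q$-expansions, so the kernels in different weights are compatible under the inclusions $\widetilde{M}_t^{(3)}\subseteq\widetilde{M}_{t+2}^{(3)}$; this is immediate but should be said. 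Overall your argument is complete and, if anything, slightly more self-contained than the paper's.
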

\begin{proof}
Since $P(z)$ has weight 2 and using $\widetilde{P}(z) = 1$ as in \eqref{P_cong}, it suffices to show, for all $j \geq 1$, that
\begin{equation*}
B_{j}^{'} = \{\widetilde{g}_i(z) = q^{i + 1} + \ldots : 0 \leq i \leq 2j - 2 \:\: \text{and} \:\: i \not\equiv 2 \Mod{3}\}
\end{equation*}
is a basis for $\widetilde{K}_{8j}^{(3)}(\Gamma_{0}(2))$.
We note that $B_{j}^{'}$ is linearly independent. To show that it spans, we let $f(z) \in \widetilde{K}_{8j}^{(3)}(\Gamma_{0}(2))$. Since $B_{j}^{'} \subseteq \widetilde{K}_{8j}^{(3)}(\Gamma_{0}(2))$, for all $0 \leq i \leq 2j - 2$ with $i \neq 2 \Mod{3}$, there exists $\widetilde{\alpha}_{i} \in \mathbb{F}_{3}$ such that
\[
\widetilde{f}(z) - \sum_{i = 0}^{2j - 2} \widetilde{\alpha}_{i} \: \widetilde{g}_i(z) = \widetilde{c} q^{2j} + \ldots \in \widetilde{K}_{8j}^{(3)}(\Gamma_{0}(2)) \subseteq \widetilde{M}_{8j}^{(3)}(\Gamma_{0}(2))
\]
for some $\widetilde{c}\in \mathbb{F}_3$.
Since $\widetilde{M}_{8j}^{(3)}(\Gamma_{0}(2))$ has basis $B_{j}$ in \eqref{B_j}, we must have 
\[
\widetilde{f} - \sum_{i = 0}^{2j - 2} \widetilde{\alpha}_{i} \: \widetilde{g}_i(z) = \widetilde{c} \widetilde{A}^{2j}.
\]
It follows that $\widetilde{c} \widetilde{A}^{2j} \in \widetilde{K}_{8j}^{(3)}(\Gamma_{0}(2))$. However, for all $t \geq 0$, the coefficient of $q^{3t + 3}$ in $\widetilde{A}^{3t}, \widetilde{A}^{3t + 1}$, and $\widetilde{A}^{3t + 2}$ is nonzero in $\F_{3}[[q]]$. We conclude that $\widetilde{c} = 0$, and therefore, that $\widetilde{M}_{8j}^{(3)}(\Gamma_{0}(2))\subseteq \text{Span}_{\mathbb{F}_3}(B_j^{\prime})$.
\end{proof}
\begin{lemma}
With $G(z)$ as in \eqref{defn_G}, we have $\widetilde{K}^{(3)}(\Gamma_{0}(2)) \subseteq \mathbb{F}_{3}[\widetilde{\Delta}, \widetilde{G}].$
\end{lemma}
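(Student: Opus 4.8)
The plan is to carry out the entire argument inside $\mathbb{F}_{3}[A]$, where $A \in M_{4}(\Gamma_{0}(2))$ from \eqref{defn_G} controls the even‑weight part of $\widetilde{M}^{(3)}(\Gamma_{0}(2))$: the powers $A^{i}$ form a basis of each $\widetilde{M}_{8j}^{(3)}(\Gamma_{0}(2))$ (as in \eqref{B_j}), and since $P \equiv 1 \Mod{3}$ by \eqref{P_cong} weights collapse modulo $2$, so every reduction of a weight‑$12$ form is a polynomial in $A$. By Lemma \ref{basis_lemma} the space $\widetilde{K}^{(3)}(\Gamma_{0}(2))$ has basis $\{g_{i} : i \ge 0,\ i \not\equiv 2 \Mod{3}\}$ with $g_{i} = A^{i}(A - A^{2}) = A^{i+1}(1-A)$. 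Because $\mathbb{F}_{3}[\Delta, G]$ is an $\mathbb{F}_{3}$-subspace, it suffices to show $g_{i} \in \mathbb{F}_{3}[\Delta, G]$ for every such $i$; in fact I expect to prove the stronger statement that $A^{j}(1-A) \in \mathbb{F}_{3}[\Delta, G]$ for all $j \ge 1$.

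First I would pin down $\Delta$ and $G$ as explicit polynomials in $A$ modulo $3$. Each is a weight‑$12$ form on $\Gamma_{0}(2)$, hence modulo $3$ lies in the span of $1, A, A^{2}, A^{3}$; matching the first few $q$-coefficients (using $A \equiv q + 2q^{2} + q^{3} + \cdots \Mod{3}$) I expect the clean factorizations
\begin{equation*}
\Delta \equiv A(1-A)^{2} \qquad \text{and} \qquad G \equiv A^{2}(1-A) \pmod{3}.
\end{equation*}
These are consistent with the exact eta‑quotient identity $G^{2} = A^{3}\Delta$ (immediate from $A = \eta(2z)^{16}/\eta(z)^{8}$, $\Delta = \eta(z)^{24}$, and $G = \eta(2z)^{24}$), which modulo $3$ reads $(A^{2}(1-A))^{2} = A^{3}\cdot A(1-A)^{2}$. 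Indeed, once $\Delta \equiv A(1-A)^{2}$ is established this identity forces $G \equiv \pm A^{2}(1-A)$, and the sign is fixed by the leading term $G = q^{2} + \cdots$.

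With these identities in hand, set $a_{j} = A^{j}(1-A)$, so that $g_{i} = a_{i+1}$. The base cases are $a_{1} = A(1-A) = A(1-A)^{2} + A^{2}(1-A) \equiv \Delta + G$ and $a_{2} = A^{2}(1-A) \equiv G$, both in $\mathbb{F}_{3}[\Delta, G]$. For the inductive step, multiplying by $A(1-A) \equiv \Delta + G$ yields the recursion
\begin{equation*}
(\Delta + G)\,a_{j} \equiv A^{j+1}(1-A)^{2} = a_{j+1} - a_{j+2} \pmod{3},
\end{equation*}
so $a_{j+2} \equiv a_{j+1} - (\Delta + G)\,a_{j}$. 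By induction $a_{j} \in \mathbb{F}_{3}[\Delta, G]$ for all $j \ge 1$; in particular every basis element $g_{i}$ lies in $\mathbb{F}_{3}[\Delta, G]$, which gives $\widetilde{K}^{(3)}(\Gamma_{0}(2)) \subseteq \mathbb{F}_{3}[\Delta, G]$.

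The hard part is the first step: obtaining the factored expressions for $\Delta$ and $G$ in terms of $A$ modulo $3$; everything afterward is a two‑term recursion. Its content is the congruence $1 + A + A^{2} \equiv (1-A)^{2} \Mod{3}$, which converts the raw $q$-expansion data $\Delta \equiv A + A^{2} + A^{3}$ and $G \equiv A^{2} - A^{3}$ into forms that display the common factor $1-A$. That shared factor is precisely what makes $\Delta + G \equiv A(1-A)$ and drives the recursion, so isolating it is the crux of the proof.
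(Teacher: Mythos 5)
Your proof is correct, and it takes a genuinely different (and arguably cleaner) route than the paper. The paper also works inside $\mathbb{F}_{3}[A]$ and also reduces to showing each basis element $g_i$ lies in $\mathbb{F}_{3}[\Delta,G]$, but its engine is the Frobenius identity $(A^{n}-A^{n+1})^{3}\equiv A^{3n}-A^{3n+3}\Mod{3}$, which yields the three-step recursion $g_{i}\equiv g_{i-3}-g_{i-6}\,g_{0}^{3}$ and therefore requires four explicitly computed base cases $g_{0}\equiv\Delta+G$, $g_{1}\equiv G$, $g_{3}\equiv-\Delta^{2}+G+G^{2}$, $g_{4}\equiv-\Delta^{2}+G$, with the induction run separately in the residue classes $i\equiv 0,1\Mod{3}$. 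You instead isolate the factorizations $\Delta\equiv A(1-A)^{2}$ and $G\equiv A^{2}(1-A)$, whence $\Delta+G\equiv A(1-A)$, and obtain the two-term recursion $a_{j+2}=a_{j+1}-(\Delta+G)a_{j}$ for $a_{j}=A^{j}(1-A)$ with only the base cases $a_{1}\equiv\Delta+G$ and $a_{2}\equiv G$; this needs no cubing and proves the slightly stronger fact that $A^{j}(1-A)\in\mathbb{F}_{3}[\Delta,G]$ for \emph{all} $j\geq 1$, not just those with $j\not\equiv 0\Mod{3}$. Your two factorizations are exactly equivalent to the paper's asserted congruences for $g_{0}$ and $g_{1}$, and I checked that your recursion reproduces the paper's values of $g_{3}$ and $g_{4}$ (the latter after using $\Delta^{3}-\Delta G+G^{3}=0$, which the paper establishes in the following lemma), so the two arguments are fully consistent. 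The only step you should make non-tentative is the verification of $\Delta\equiv A(1-A)^{2}\Mod{3}$: since $\widetilde{M}_{12}^{(3)}(\Gamma_{0}(2))$ is spanned by $1,A,A^{2},A^{3}$ with echelon leading terms, this is a finite check of four $q$-coefficients, and your cross-check via the exact identity $G^{2}=A^{3}\Delta$ then pins down $G$ up to the sign fixed by its leading term. This is the same level of finite coefficient-matching the paper itself relies on, so there is no gap.
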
 
\begin{proof}
In view of Lemma \ref{basis_lemma}, it suffices to show, for all $i\not\equiv 2\Mod{3}$, that $\widetilde{g}_{i}$ as in \eqref{g_i} lies in $\mathbb{F}_{3}[\widetilde{\Delta},\widetilde{G}]$.
We note that
\begin{align*}
    \widetilde{g}_0 &= \widetilde{A} - \widetilde{A}^{2} = \widetilde{\Delta} + \widetilde{G}, \\
    \widetilde{g}_1 &= \widetilde{A}(\widetilde{A} - \widetilde{A}^{2}) = \widetilde{G}, \\ 
    \widetilde{g}_3 &= \widetilde{A}^{3} (\widetilde{A} - \widetilde{A}^{2}) =  - \widetilde{\Delta}^{2} + \widetilde{G} + \widetilde{G}^{2}, \\
    \widetilde{g}_4 &= \widetilde{A}^{4}(\widetilde{A} - \widetilde{A}^{2}) =  - \widetilde{\Delta}^{2} + \widetilde{G}. \\
\end{align*}
Let $n \geq 1$. We have
\begin{align*}
\widetilde{g}_{3n} - \widetilde{g}_{3(n + 1)} &= \widetilde{A}^{3n} (\widetilde{A} - \widetilde{A}^{2}) - \widetilde{A}^{3(n+1)}(\widetilde{A} - \widetilde{A}^{2}) 
= (\widetilde{A}^{3n} - \widetilde{A}^{3n+1}) \: \widetilde{g}_{0} \\
& = (\widetilde{A}^{n} - \widetilde{A}^{n+1})^{3} \: \widetilde{g}_{0} 
= \widetilde{A}^{3(n-1)} (\widetilde{A} - \widetilde{A}^{2}) \: \widetilde{g}_{0}^{3}
= \widetilde{g}_{3(n-1)} \: \widetilde{g}_{0}^{3}.
\end{align*} 
Similarly, we find that $\widetilde{g}_{3n + 1} - \widetilde{g}_{3(n+1) + 1} = \widetilde{g}_{3(n-1)+1} \: \widetilde{g}_{0}^{3}$.
Therefore, for all $i \geq 5$ with $i \not\equiv 2 \Mod{3}$, we have $\widetilde{g}_{i} = \widetilde{g}_{i-3} - \widetilde{g}_{i - 6} \: \widetilde{g}_{0}^{3}$. The claim follows by induction on $i$.
\end{proof}
\begin{lemma}
The space $\widetilde{K}^{(3)}(\Gamma_{0}(2))$ is a free $\mathbb{F}_{3}[\widetilde{G}^{3}]$-module with basis 
\[
\{\widetilde{\Delta}, \widetilde{\Delta}^{2}, \widetilde{\Delta} \widetilde{G}^{2}, \widetilde{\Delta}^{2} \widetilde{G}, \widetilde{G}, \widetilde{G}^{2}\}.
\]
\end{lemma}
\begin{proof}
Since $\widetilde{G}(z)^{3}\in q^{3} \: \mathbb{F}_{3}[q^{3}]$, the space $\widetilde{K}^{(3)}(\Gamma_{0}(2))$ is an $\mathbb{F}_{3}[\widetilde{G}^{3}]$-module. With $h(x) = x^{3} - \widetilde{G}x + \widetilde{G}^{3} \in (\mathbb{F}_{3}[\widetilde{G}])[x]$, we observe that
$h(\widetilde{\Delta}) = \widetilde{\Delta}^{3} - \widetilde{\Delta} \widetilde{G} + \widetilde{G}^{3} = 0$. It follows that $[\mathbb{F}_{3}(\widetilde{\Delta}, \widetilde{G}) : \mathbb{F}_{3}(\widetilde{G})] = 3$. Therefore, $\mathbb{F}_{3}[\widetilde{\Delta}, \widetilde{G}]$ is a free $\mathbb{F}_{3}[\widetilde{G}^{3}]$-module with basis 
\[
\{1, \widetilde{\Delta}, \widetilde{\Delta}^{2}, \widetilde{G}, \widetilde{\Delta} \widetilde{G}, \widetilde{\Delta}^{2}\widetilde{G}, \widetilde{G}^{2}, \widetilde{\Delta} \widetilde{G}^{2}, \widetilde{\Delta}^{2} \widetilde{G}^{2}\}.
\]
We note that $1, \widetilde{\Delta} \widetilde{G}, \widetilde{\Delta}^{2}\widetilde{G}^{2} \in q^{3}\mathbb{F}_{3}[[q^{3}]]$ while the other forms in the basis lie in $\widetilde{K}^{(3)}(\Gamma_{0}(2))$. It follows that $\widetilde{K}^{(3)}(\Gamma_{0}(2))$ is a free $\mathbb{F}_{3}[\widetilde{G}^{3}]$-module with basis $\{\widetilde{\Delta}, \widetilde{\Delta}^{2}, \widetilde{\Delta} \widetilde{G}^{2}, \widetilde{\Delta}^{2}\widetilde{G}, \widetilde{G}, \widetilde{G}^{2}\}$.
\end{proof}
Let $i \in \{1, 5\}$. We consider the map $\rho_{6,i}$ on $\mathbb{F}_{3}[[q]]$ defined by 
\begin{equation} \label{projection}
\rho_{6,i}  \left(  \sum_{n}c(n)\: q^{n} \right) = \sum_{n \equiv i \Mod{6}} c(n)q^{n}.
\end{equation}
We note that $\rho_{6,i}$ is $\mathbb{F}_{3}[\widetilde{G}^{3}]$-linear.  For primes $\ell \notin \{ 2, 3 \}$, the definitions \eqref{Hecke_defn} and \eqref{projection} imply that 
\begin{equation}\label{commutativity}
\rho_{6,i}(\widetilde{f}) \mid T_{\ell} = \rho_{6, \: i \ell^{-1}}(\widetilde{f} \mid T_{\ell}).
\end{equation}
We also note that the $\mathbb{F}_{3}[\widetilde{G}^{3}]$-submodules $V_{1} = \textup{Ker} (\rho_{6,1})$ and $V_{5} = \textup{Ker} (\rho_{6,5})$ have bases \\ $\{\widetilde{\Delta}^{2}, \widetilde{G}, \widetilde{G}^{2}, \widetilde{\Delta} \widetilde{G}^{2}\}$ and $\{ \widetilde{\Delta}, \widetilde{G}, \widetilde{\Delta}^{2} \widetilde{G}, \widetilde{G}^{2} \}$, respectively.
From \eqref{Hecke_defn}, we observe that
\begin{gather}
\displaystyle{T_{\ell}^{'} : 
\begin{dcases}
V_{1} \to V_{1}, \, V_{5} \to V_{5} &\text{if $\ell \equiv 1 \Mod{6}$},\quad \\
V_{1} \to V_{5}, \, V_{5} \to V_{1} &\text{if $\ell \equiv -1 \Mod{6}$}.\quad
\end{dcases} \label{V_map}
  }
\end{gather}
We require
\begin{align}
\rho_{6,1}(\widetilde{\Delta}) = \widetilde{D}_{2},  \:\: \rho_{6,1}(\widetilde{\Delta}^2 \widetilde{G}) = - \widetilde{D}_{2}^{4} \widetilde{F}^{3}, \:\: \rho_{6,1}(\widetilde{G}) =  \rho_{6,1}(\widetilde{G}^2) = 0, \label{rho_1}\\
\rho_{6,5}(\widetilde{\Delta}^2) = - \widetilde{D}_{2}^2 \widetilde{F}^{3},  \:\: \rho_{6,5}(\widetilde{\Delta} \widetilde{G}^{2}) = \widetilde{D}_{2}^{5}, \:\: \rho_{6,5}(\widetilde{G}) =  \rho_{6,5}(\widetilde{G}^2) = 0. \label{rho_5}
\end{align}
We turn to the proof of Theorem \ref{N4_l3}.
\subsubsection{Proof of Theorem \ref{N4_l3}.}
We first observe, using \eqref{D2G} and \eqref{rho_1}, that $\rho_{6,1}$ maps $V_{5}$ onto $W_{1}$ via 
\begin{align}
\rho_{6,1} (\widetilde{\Delta} \cdot \widetilde{G}^{3m}) &= \widetilde{D}_{2}^{6m + 1}, \label{rho_1_1} \\
\rho_{6,1}(\widetilde{\Delta}^{2} \widetilde{G} \cdot G^{3m}) &= - \widetilde{D}_{2}^{6m + 4} \widetilde{F}^{3} \label{rho_1_2}.
\end{align}
Similarly, using \eqref{D2G} and \eqref{rho_5}, we find that $\rho_{6,5}$ maps $V_{1}$ onto $W_{5}$ via
\begin{align}
\rho_{6,5} (\widetilde{\Delta}^{2} \cdot \widetilde{G}^{3m}) &= - \widetilde{D}_{2}^{6m + 2} \widetilde{F}^{3}  \label{rho_5_1}, \\
\rho_{6,5}(\widetilde{\Delta} \widetilde{G}^{2} \cdot \widetilde{G}^{3m}) &= \widetilde{D}_{2}^{6m + 5} \label{rho_5_2}.
\end{align}
Let $\ell \equiv 1 \Mod{6}$, and let $m\geq 0$.  We use \eqref{commutativity}, \eqref{V_map}, \eqref{rho_1_1} and \eqref{rho_1_2} to compute
\begin{align*}
\widetilde{D}_{2}^{6m + 1} \mid T_{\ell}^{'} &= \rho_{6,1} (\widetilde{\Delta} \cdot \widetilde{G}^{3m}) \mid T_{\ell}^{'} \equiv 
\rho_{6,1} ((\widetilde{\Delta}\cdot \widetilde{G}^{3m}) \mid T_{\ell}^{'}) \\
 & = \rho_{6,1} \left( \sum_{r \geq 0} a_{r} \widetilde{\Delta} \cdot \widetilde{G}^{3r} + \sum_{s \geq 0} b_{s} \widetilde{G} \cdot \widetilde{G}^{3s} + \sum_{t \geq 0} c_{t} \widetilde{G}^{2} \cdot \widetilde{G}^{3t} + \sum_{u \geq 0} d_{u} (\widetilde{\Delta}^{2} \widetilde{G}) \cdot \widetilde{G}^{3u} \right) \\
& = \sum_{r \geq 0} a_{r} \widetilde{D}_{2}^{6r + 1} - \sum_{u \geq 0} d_{u} \widetilde{D}_{2}^{6u + 4} \widetilde{F}^{3} \ \text{in} \ \mathbb{F}_3\llbracket q\rrbracket, 
\end{align*}
which gives $\widetilde{D}_2^{6m+1}\mid T_{\ell}^{'}\in W_1$.
We argue similarly using \eqref{commutativity}, \eqref{V_map}, \eqref{rho_1_1}, \eqref{rho_1_2}, \eqref{rho_5_1}, and \eqref{rho_5_2} to complete the proof that $T_{\ell}^{'}$ preserves $W_{1}$ and $W_{5}$ when $\ell \equiv 1 \Mod{6}$ and swaps $W_{1}$ and $W_{5}$ when $\ell\equiv 5\Mod{6}$. 

\subsubsection{Conclusion of Proof of Part (3) of Theorem \ref{simple_bound}}
With $\Theta(z)$ as in \eqref{theta}, we note that Proposition 15.1.2 of \cite{Cohen2017ModularFA} implies that $M(\Gamma_{0}(4)) = \mathbb{C}[\Theta^{4}, F]$.  We also observe that $\widetilde{\Theta}^{4} = 1 + 2\widetilde{F}$ in $\F_{3}[[q]]$.  It follows that 
\begin{equation}
\label{level_4_gen}
\widetilde{M}^{(3)}(\Gamma_{0}(4))= \mathbb{F}_{3}[\widetilde{F}].
\end{equation}
Hence, from \eqref{D2_equivalence} we see that $\widetilde{f} \in W_1$ has $\text{deg}_{\widetilde{F}}(\widetilde{f}) \in \{3, 15\}$ mod $18$, while
$\widetilde{f} \in W_5$ has $\text{deg}_{\widetilde{F}}(\widetilde{f}) \in \{9, 15\}$ mod $18$.  We conclude part (3) of Theorem \ref{simple_bound} on applying Theorem \ref{N4_l3} while observing from part (3) of Theorem \ref{nilpotent_theorem} that $T_{\ell}^{\prime}$ lowers degrees in $\widetilde{F}$ for primes $\ell\not\in \{2, 3\}$.     

\subsection{Proof of Proposition \ref{vanishing}}
We use the theta-series expansion for $\eta(z)$ in \eqref{eta_expansion} to prove part one of the proposition for $(\delta, p) = (1, 2)$.  The proofs for $(\delta, p) \in \{(1, 23), (2, 11), (3, 7), (4, 5)\}$ are similar, and are an adaptation of an argument of Elkies \cite{elkies}.

Let $m$ be odd, and let $B(z) = \eta(8z)\eta(8\cdot 2^mz)$.  We observe that
$\widetilde{\Delta}(z)^{\frac{1 + 2^m}{3}}=\widetilde{B}(z)$ in $\F_{2}[[q]]$.
Using \eqref{eta_expansion}, we obtain
\[
\widetilde{\Delta}(z)^{\frac{1 + 2^m}{3}} = \sum_{\substack{a, b \in \mathbb{Z} \\ a, b \equiv 1 \pmod{6}}} \chi_{12}(ab) \: q^{\frac{a^2 + 2^m b^2}{3}}.
\]
Let $\ell$ be prime with $\left(  \frac{-2}{\ell}  \right) = -1$. We claim that $ \ell \mid (a^2 + 2^m b^2)$ implies that $a, \: b \equiv 0 \Mod{\ell}$.  
We suppose that $\ell \nmid b$. Then we have $(ab^{-1})^{2} \equiv -2^m \Mod{\ell}$. Since $m$ is odd, we find that $\left(\frac{-2^m}{\ell}\right) =  \left(\frac{-2}{\ell} \right) = -1$, which is a contradiction.
Let $\widetilde{\Delta}(z)^{\frac{1 + 2^m}{3}} = \sum \widetilde{a(n)} q^{n}$. Since $\widetilde{\Delta}(z)^{\frac{1 + 2^m}{3}}$ has even weight, \eqref{Hecke_defn} implies that
\begin{equation} \label{del_act_2}
\widetilde{\Delta}(z)^{\frac{1 + 2^m}{3}} \mid T_{\ell} = \sum \left(  \widetilde{a(\ell n)} -  \widetilde{a\left( \frac{n}{\ell}    \right)}  \right) q^{n}.
\end{equation}
We suppose that $\ell \nmid n$. Then we have $\widetilde{a \left( \frac{n}{\ell} \right)} = 0$. We also claim that $\widetilde{a(\ell n)} = 0$. If $\widetilde{a(\ell n)} \neq 0$, then there exists $a, b \equiv 1 \Mod{6}$ such that $\ell n = \frac{a^2 + 2^m b^2}{3}$. The above argument shows that $\ell \mid a, \:b,$ and hence, that $\ell \mid n$,  which is a contradiction. When $\ell \mid n$, we compute 
\begin{align*}
    \widetilde{a(\ell n)} = \sum_{\substack{a, b \: \equiv \: 1 \Mod{6} \\ \frac{a^2 + 2^m b^2}{3}
     = \: \ell n }} \chi_{12}(a b) = \sum_{\substack{r, s \: \equiv \: 1 \Mod{6} \\ \frac{\ell^2 r^2 + 2^m \ell^2 s^2}{3} = \: \ell n }} \chi_{12}((\ell r)(\ell s)) = \sum_{\substack{r, s \: \equiv \: 1 \Mod{6} \\ \frac{r^2 + 2^m s^2}{3} = \frac{n}{\ell} }} \chi_{12}(rs) = \widetilde{a \left( \frac{n}{\ell} \right)}.
\end{align*}  
Using  \eqref{del_act_2}, we conclude that $\widetilde{\Delta}(z)^{\frac{1 + 2^m}{3}} \mid T_{\ell} = 0$.
One can argue similarly to prove the remaining cases in part one of the proposition using the theta-series expansion for $\eta(z)^{3}$ as in~\eqref{eta3_expansion}. Part two of proposition uses the same arguments.

\subsection{Proof of Theorem \ref{partition_5} and Theorem \ref{partition_3}}
We prove Theorem \ref{partition_5} for $p = 5$. One can argue similarly to prove the Theorem for $p \in \{3, 7\}.$

\medskip
The generating function for $a_{t}(n)$ is 
\begin{equation} \label{gen_fun2}
   \sum_{n \geq 0} a_{t}(n) q^{n} = \displaystyle{\prod_{n \geq 1} \frac{(1 - q^{nt})^{t}}{1 - q^{n}}}.
\end{equation}
We recall that $k_{5, t} = \frac{5^{2t} - 1}{24}$.
Using \eqref{eta}, \eqref{delta}, and \eqref{gen_fun2}, we deduce that
\begin{equation*}
\sum_{n \geq 0} \widetilde{a_{5^t}}(n - k_{5, t}) \: q^{n} = \widetilde{\Delta}(z)^{k_{5, t}} \: \text{in} \: \F_{5}[[q]].
\end{equation*}
We let $\ell \equiv \pm 1 \Mod{5}$, and we let $r \geq 1$ have $5^r \geq m_{5, t} = 
 1 + \Big\lfloor \frac{2k_{5, t}}{5} \Big\rfloor$. Part (1) of Theorem~\eqref{simple_bound} implies that 
 \begin{gather*}
0 = \widetilde{\Delta}(z)^{k_{5, t}} \mid (T_{\ell}^{'})^{5^r} =
\begin{dcases}
\widetilde{\Delta}(z)^{k_{5, t}} \mid T_{\ell}^{5^r} &\text{if $\ell \equiv -1 \Mod{5}$}, \quad \\
\widetilde{\Delta}(z)^{k_{5, t}} \mid (T_{\ell}^{5^r} - 2) &\text{if $\ell \equiv 1 \Mod{5}$} \quad
\end{dcases}
 \end{gather*} 
since $(T_{\ell} - 2)^{5^r} \equiv T_{\ell}^{5^r} - 2 \Mod{5}$.
Applying Corollary \ref{coeff_corollary}, we find, for all $n \geq 0$ with $\ell \nmid n$, that
\begin{gather*}
a_{5^t}(\ell^{5^r} n - k_{5, t}) - \ell^{12 k_{5, t} - 1} \: a_{5^t}(\ell^{5^r - 2} \: n - k_{5, t}) \equiv
 \begin{dcases}
0 &\text{if $\ell \equiv -1 \Mod{5}$},\quad \\
2  \widetilde{a_{5^t}}(n - k_{5, t}) &\text{if $\ell \equiv 1 \Mod{5}$},\quad
\end{dcases}
\end{gather*}
holds modulo $5$, 
which proves parts (1) and (3) of Theorem 1.6 for $p = 5$.

While Theorem \eqref{simple_bound} concerns the iteration of a single $T_{\ell}^{'}$, its proof applies more generally to show that 
\begin{equation} \label{Hecke_iterate}
\widetilde{\Delta}(z)^{k_{5, t}} \mid T_{\ell_{1}}^{'} \mid \cdots \mid T_{\ell_{m_{5, t}}}^{'} = 0.
\end{equation}
Since $\ell_{i} \equiv - 1 \Mod{5}$, we have $T_{\ell_{i}}^{'} = T_{\ell_{i}}$. By \eqref{Hecke_defn}, for $n \geq 1$ with $\ell_{1} \nmid n$, we observe that $\widetilde{\Delta}^{k_{5, t}} \mid T_{\ell_{1}}$ has $n$th coefficient 
\[
\test{a_{5^t}(\ell_{1} n - k_{5, t})} - \test{a_{5^t}\left( \frac{n}{\ell_{1}} - k_{5, t} \right)} = \test{a_{5^t}(\ell_{1} n - k_{5, t})}.
\]
Similarly, for $\ell_{1} \neq \ell_{2}$, and $n \geq 1$ with $\ell_{1}, \: \ell_{2} \nmid n$, we find that $\widetilde{\Delta}^{k_{5, t}} \mid T_{\ell_{1}} \mid T_{\ell_{2}}$ has $n$th coefficient 
\[
\test{a_{5^t}(\ell_{1} \ell_{2} n - k_{5, t})} - \test{a_{5^t}\left( \frac{\ell_{1} n}{\ell_{2}} - k_{5, t} \right)} = \test{a_{5^t}(\ell_{1} \ell_{2} n - k_{5, t})}.
\]
We iterate in this way using \eqref{Hecke_iterate} to prove part (2) of Theorem \ref{partition_5} for $p = 5$.

\medskip

To prove Theorem \eqref{partition_3}, we recall that $D_{2}(z) = \eta(2z)^{12}$, and we let $\gcd(r, 6) =~1$. When $n$ is odd, we use \eqref{gen_fun1} to deduce that
\begin{equation*} 
\sum p_{12r} \left(\frac{n - r}{2} \right) q^{n} = \eta(2z)^{12r} = D_{2}(z)^{r}.
\end{equation*}
From here, the proof runs parallel to the proof of Theorem \ref{partition_5} for $p = 5$ given above, this time applying part (2a) of Theorem \ref{simple_bound}.

\iffalse
 \begin{lstlisting}[
    style = myListingStyle,
    caption = {Finding index of nilpotency for powers of $\Delta^{a}$ upon action by $T_{19}$  modulo 5}
    ]

T (f , p , k , m ) =
{
    f = truncate ( f ) ;
    sum(n = 0 , poldegree(f)\p ,
    sumdiv(gcd(p , n),r,polcoeff(f, p * n/r ^2) * r^(k-1)) * q^n) +
    O(q^(poldegree(f)\m+1) ) ;
}

pow_Del(a , m) = 
{
    my(A, Da, D);
    A = matrix(a , a);
    D = q * eta(Mod(1,m)*q+O(q^(a+1)))^24;
    Da = D;
    for(i = 1, a,
    if(i > 1, Da*=D);
    for(j = 1, a, 
    A[j , i] = polcoeff(Da , j)
    )
);
return(A)
}

Hec_action_Del(a, p, m) = 
{
    my(B, D, TD);
    B = matrix(a , 1) ;
    D = q^a * eta(Mod(1,m) * q + O(q^(n+1)))^(24*a);
    TD = T(D, p , 12 * a , m);
    for(i = 1, a, 
    B[i , 1] = polcoeff(TD , i)
    );
return(B)
}

a = ;
p = ;
m = ;
counter = 0;
{
A = pow_Del(a , m) ;
while(a > 0 ,
    A = A[1..a,1..a];
    B = Hec_action_Del(a , p , m);
    S = Mod(matinverseimage(A , B), m);
    { 
    my(R = 0);
    M = matsize(S);
    forstep (i = M[1], 1, -1,
        if (S [i , 1]!= 0,
            R = i;
            break
        )
    );
    }
    a = if(R % m == 0, R - 1, R) ;
    if(R, print ([R , a]) , print(0));
    counter ++);
print ("Index of Nilpotency:", counter)
}


For the action of T_{11} - 2 on powers of Delta modulo 5, the last section of the code is modified as follows:

a = ;
p = ;
m = ;
counter = ;
A = pow_Del(a , m) ;
while(a > 0 ,
    A = A[1..a,1..a];
    B = Hec_action_Del(a , p , m);
    S = Mod(matinverseimage(A , B), m);
{    
my(R=0);
M=matsize(S);
forstep(i=M[1],1,-1,
if(S[i,1]!=0,
if((R % m == 0) || (i%125 == 76)||(i%125 == 77)|| ((i%25 == 1)||(i%25 == 2)) && ((i!=1) && (i!=2)),
next,
R=i; 
break
)
)
); 
a = if((R % m == 0), R-1, R);
if(R,print([R,a]),print(0));
counter++);
}
print("Nilpotence Order",counter)
}


For the action of T_{13} on powers of $\Delta^{a}$ modulo 7, the last section of the code is modified as follows:
a = ;
p = ;
m = ;
counter = 0;
{
A = pow_Del(a,m);
while(a > 0,
A = A[1..a,1..a];
B = Hec_action_Del(a,p,m);
S = Mod(matinverseimage(A,B), m);
{
my (R = 0);
M = matsize (S);
forstep(i = M[1], 1, -1,
if(S[i,1]!=0, if((i % m == 0)||(i  % 98 == 36)|| ((i % 14 == 1) && (i!=1) && (i % 98! = 1)), 
next,
R = i;
break
)
)
); 
a = R;
if(R, print([R,a]), print(0)
);
counter++);
print("Nilpotence Order", counter)
}

For the action of T_{29} - 2 on powers of Delta modulo 7, the code is modified as follows:
a = ;
p = ;
m = ;
counter = 0;
{
A = pow_Del(a,m);
while(a > 0,
A = A[1..a,1..a];
B = Hec_action_Del(a,p,m);
S = Mod(matinverseimage(A,B), m);
{
my(R=0);
M=matsize(S);
forstep(i=M[1],1,-1,
if(S[i,1]!=0,if((i % m == 0)||(i  % 98 == 5), 
next,
R=i;
break))); 
a = R;
if(R,print([R,a]),print(0));counter++);
}
print("Nilpotence Order", counter)
}


\end{lstlisting}

\begin{lstlisting}[
    style = myListingStyle,
    caption = {Code to find $n$ such that coefficients $a_{5^t}(n - k_{t}) \equiv 0 \Mod{5}$}
    ].
a5t_coeffs(t, m) =
{
q = 'q + O('q^(m+1)); 
E = (eta(q^(5^t))^(5^t)) / eta(q); 
Vec(E)
}
find_n(t, m) = 
{
T = (25^t - 1)/24; 
a5t_cs = a5t_coeffs(t, m); 
for(n = T, m, 
if(Mod(a5t_cs[n - T + 1], 5) == 0,
print(n);
)
);
}
\end{lstlisting}
\fi
\end{document}